\begin{document}
\newcommand{\B}{{\cal B}}
\newcommand{\D}{{\cal D}}
\newcommand{\E}{{\cal E}}
\newcommand{\F}{{\cal F}}
\newcommand{\A}{{\cal A}}
\newcommand{\Hh}{{\cal H}}
\newcommand{\Pp}{{\cal P}}
\newcommand{\Z}{{\bf Z}}
\newcommand{\T}{{\cal T}}
\newcommand{\ZZ}{{\mathbb{Z}}}
\newcommand{\qed}{\hphantom{.}\hfill $\Box$\medbreak}
\newcommand{\proof}{\noindent{\bf Proof \ }}
\renewcommand{\theequation}{\thesection.\arabic{equation}}
\newtheorem{theorem}{Theorem}[section]
\newtheorem{lemma}[theorem]{Lemma}
\newtheorem{corollary}[theorem]{Corollary}
\newtheorem{remark}[theorem]{Remark}
\newtheorem{example}[theorem]{Example}
\newtheorem{definition}[theorem]{Definition}
\newtheorem{construction}[theorem]{Construction}


\medskip
\title{New Existence and Nonexistence Results for Strong External Difference Families \thanks{Research supported by NSFC grant 11431003 (L. Ji), 11301457 (Y. Zhang),
 and NSERC RGPIN-16-05610 (R. Wei).
 }}

 \author{{\small   Jingjun Bao$^1$, \ Lijun Ji$^1$, \ Reizhong Wei$^2$\ and Yong Zhang$^3$} \\
 {\small $^1$ Department of Mathematics, Soochow University, Suzhou
 215006, China}\\
 {\small E-mail: jilijun@suda.edu.cn}\\
{\small $^2$ Department of Computer Science, Lakehead University, Thunder Bay, ON P7B 5E1
Canada}\\
 {\small E-mail: rwei@lakeheadu.ca}\\
  {\small $^3$ School of Mathematics and Statistics, Yancheng Teachers University, Jiangsu 224002,  China}\\
 }


\date{}
\maketitle
\begin{abstract}
\noindent \\ In this paper, we use character-theoretic techniques to give new nonexistence results for $(n,m,k,\lambda)$-strong external difference families (SEDFs). We also use cyclotomic classes to give two new classes of SEDFs with $m=2$.

\medskip

\noindent {\bf Keywords}:  Strong external difference family, Algebraic manipulation detection code, Character theory, Cyclotomic number

\medskip


\end{abstract}


\section{Introduction}

Motivated by applications to algebraic manipulation detection codes (or AMD codes) \cite{CDFPW2008, CFP2013, CPX2015}, Paterson and Stinson introduced strong external difference families (or SEDFs) in \cite{PS2016}. SEDFs are closely related to but stronger than  external difference families (or EDFs) \cite{OKSS2004}. In \cite{PS2016}, it was noted that optimal AMD codes can be obtained from EDFs, whereas optimal strong AMD codes can be obtained from SEDFs. See \cite{PS2016} for a discussion of these and related structures and how they relate to AMD codes.

Recently, Matin and Stinson \cite{MS-arXiv} and Hucznska and Paterson \cite{HP-arXiv} further investigated the existence of
SEDFs and gave some new results about the nonexistence of certain SEDFs. Their results show that the existence of SEDFs
 is an interesting mathematical problem in its own
right, independent of any applications to AMD codes.
In \cite{MS-arXiv}, the authors  presented some nonexistence results of SEDFs by using character theory.
In this paper, we shall explore such a method to give some new nonexistence results for SEDFs.
Let us recall the definition of   SEDFs.

Let $G$ be a finite abelian group of order $n$ (written multiplicatively) with the identity $e\in  G$. For any two nonempty subsets $A_1,A_2$ of $G$, the multiset
$$\Delta_E(A_1,A_2)=\{xy^{-1}\colon x\in A_1, y\in A_2\}$$
is called the external difference of $A_1$ and $A_2$.
Let $k, \lambda, m$ be positive integers and let $A_1,\ldots, A_m$ be (pairwise disjoint) $k$-subsets of $G$. If the following multiset equation holds:
$$\bigcup\limits_{\{\ell \colon \ell\neq j\}}\Delta_E(A_{\ell},A_j)=\lambda (G\setminus \{e\})$$
for each $1\leq j\leq m$,
then the collection $\{A_1,\ldots, A_m\}$ is denoted as an $(n,m,k,\lambda)$-SEDF.

When the multiset equation is replaced with $$\bigcup\limits_{\{\ell,j \colon \ell\neq j\}}\Delta_E(A_{\ell},A_j)=\lambda (G\setminus \{e\}),$$ the collection $\{A_1,\ldots, A_m\}$ is denoted as an $(n, m, k, \lambda)$-EDF. Clearly, an $(n,m,k,\lambda)$-SEDF is an $(n,m,k,m\lambda)$-EDF.

From the definition of an SEDF, it is easy to see that $m>1$, $mk\leq n$ and
\begin{equation}
\label{mk2=lambda(n-1)}
(m-1)k^2=\lambda (n-1).
\end{equation} Let us recall some known results for SEDFs.

\begin{lemma}[\cite{HP-arXiv}]
For an $(n,m,k,\lambda)$-SEDF, either

{\rm (1)} $k=1$ and $\lambda=1$; or

{\rm (2)} $k>1$ and $\lambda <k$.

\end{lemma}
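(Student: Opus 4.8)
The plan is to derive both alternatives purely from the counting identity~(\ref{mk2=lambda(n-1)}), namely $(m-1)k^2=\lambda(n-1)$, together with the two facts recorded just before it (that $m>1$ and $mk\le n$) and the standing assumption that $\lambda$ is a positive integer. No character theory is needed for this particular statement.

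First I would dispose of the case $k=1$. Substituting $k=1$ into~(\ref{mk2=lambda(n-1)}) gives $\lambda(n-1)=m-1$. Since $mk=m\le n$, we have $m-1\le n-1$, so $\lambda=(m-1)/(n-1)\le 1$; as $\lambda$ is a positive integer, this forces $\lambda=1$, which is alternative~(1). (Note $n\ge mk=m\ge 2$, so the division by $n-1$ is legitimate.)

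Next, assume $k>1$; I want to show $\lambda<k$. Rearranging~(\ref{mk2=lambda(n-1)}) gives $\lambda=(m-1)k^2/(n-1)$, an expression that is decreasing in $n$, so the constraint $n\ge mk$ (substituting the smallest admissible value $n=mk$ to get the largest possible $\lambda$) yields
\[
\lambda=\frac{(m-1)k^2}{n-1}\le \frac{(m-1)k^2}{mk-1}.
\]
It then suffices to check that the right-hand side is strictly less than $k$. Clearing denominators (all quantities are positive), the inequality $(m-1)k^2<k(mk-1)$ is equivalent to $mk^2-k^2<mk^2-k$, i.e.\ to $k<k^2$, which holds precisely because $k>1$. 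Hence $\lambda<k$, giving alternative~(2).

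There is essentially no hard step: the only points to watch are the direction of the inequality coming from $n\ge mk$ and, in the case $k=1$, the appeal to integrality of $\lambda$ to upgrade $\lambda\le 1$ to $\lambda=1$. The tempting detour would be to invoke the character-theoretic identity $|\chi(A_j)|^2-\chi(D)\overline{\chi(A_j)}=\lambda$ (with $D=\bigcup_\ell A_\ell$), valid for every nontrivial character $\chi$, which does reprove~(\ref{mk2=lambda(n-1)}) after summing over all nontrivial $\chi$ and over $j$; but it is superfluous for establishing this bound, and I would avoid it here.
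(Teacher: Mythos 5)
Your proof is correct. Note that the paper itself does not prove this lemma at all: it is quoted as a known result from \cite{HP-arXiv}, so there is no in-paper argument to compare against. Your derivation is the natural elementary one: both alternatives follow from the counting identity $(m-1)k^2=\lambda(n-1)$ combined with the disjointness bound $mk\le n$ and integrality of $\lambda$, and each step checks out --- in the case $k=1$ the inequality $m-1\le n-1$ forces $\lambda\le 1$ and hence $\lambda=1$, and in the case $k>1$ the bound $\lambda\le (m-1)k^2/(mk-1)<k$ reduces, after clearing denominators, exactly to $k<k^2$. You are also right that character theory is unnecessary here; this is purely a parameter-counting statement.
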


\begin{theorem}
[\cite{PS2016}]
There exists an $(n, m, k, 1)$-SEDF if and only if $m = 2$ and $n = k^2 + 1$,
or $k = 1$ and $m = n$.
\end{theorem}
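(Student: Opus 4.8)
The plan is to handle the two directions separately, and in the ``only if'' direction to reduce everything to a single rigidity statement about characters. For the ``if'' direction there are two explicit constructions. When $k=1$ and $m=n$, take $A_i=\{g_i\}$ with $\{g_1,\dots,g_n\}=G$; then for each $j$ the multiset $\bigcup_{\ell\ne j}\{g_\ell g_j^{-1}\}$ equals $\{gg_j^{-1}:g\in G,\ g\ne g_j\}=G\setminus\{e\}$, so this is an $(n,n,1,1)$-SEDF. When $m=2$ and $n=k^2+1$ I would work in $G=\mathbb{Z}_{k^2+1}$ and set $A_1=\{1,2,\dots,k\}$ and $A_2=\{-ik \bmod (k^2+1):0\le i\le k-1\}$; then $A_1-A_2=\{j+ik:1\le j\le k,\ 0\le i\le k-1\}=\{1,2,\dots,k^2\}=G\setminus\{0\}$ with each element occurring once, and $A_2-A_1=-(A_1-A_2)=G\setminus\{0\}$ as well, so $\{A_1,A_2\}$ is a $(k^2+1,2,k,1)$-SEDF. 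Disjointness is immediate since the smallest positive residue in $A_2\setminus\{0\}$ is $k+1>k$.

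For the ``only if'' direction I would first combine the basic identity $(m-1)k^2=\lambda(n-1)$ from (1.1) with Lemma~1.1. Taking $\lambda=1$, if $k=1$ the identity forces $m-1=n-1$, i.e. $m=n$; and once I show $m=2$ whenever $k>1$, the same identity gives $n-1=k^2$, i.e. $n=k^2+1$. Thus the entire problem reduces to the single claim that an $(n,m,k,1)$-SEDF with $k>1$ must have $m=2$.

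To attack this I would pass to the group ring $\mathbb{Z}[G]$, writing $\widehat{A}=\sum_{a\in A}a$, $A^{(-1)}=\sum_{a\in A}a^{-1}$, and $\sigma=\sum_{\ell=1}^m \widehat{A_\ell}$, so that the SEDF condition becomes $(\sigma-\widehat{A_j})\,A_j^{(-1)}=\widehat{G}-e$ for every $j$. Applying an arbitrary nontrivial character $\chi$ of $G$, and setting $a_j=\chi(\widehat{A_j})$ and $S=\chi(\sigma)=\sum_\ell a_\ell$, and using $\chi(\widehat{G})=0$ and $\chi(A_j^{(-1)})=\overline{a_j}$, this collapses to the remarkably rigid identity
\[
\overline{a_j}\,(a_j-S)=1\qquad (1\le j\le m)
\]
valid for every nontrivial $\chi$. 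The whole point of specializing to $\lambda=1$ is that the right-hand side is $1$: since $a_j$ and $S$ lie in the ring of integers $\mathbb{Z}[\zeta_n]$, the identity says that $\overline{a_j}$ (hence $a_j$ and $a_j-S$) is a \emph{unit} in $\mathbb{Z}[\zeta_n]$, for every $j$ and every nontrivial character. Conjugating shows $S\overline{a_j}=\overline{S}a_j$ is real, so when $S\ne 0$ each $a_j$ is a real multiple $c_jS$ of $S$; feeding this back gives $\sum_j c_j=1$ and $c_j^2-c_j=1/|S|^2$, so each $c_j$ is one of the two roots $c^\pm=(1\pm\sqrt{1+4/|S|^2})/2$, and if $p$ of them equal $c^+$ and $q=m-p$ equal $c^-$ then the trace condition $\sum_j c_j=1$ reduces to $(q-p)\sqrt{1+4/|S|^2}=m-2$.

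The main obstacle is converting these per-character constraints into the global conclusion $m=2$. My plan is to play the unit property against the two-value structure: the unit property forces the field norm $N_{\mathbb{Q}(\zeta_n)/\mathbb{Q}}(a_j)=\pm1$, while $a_j=\chi(\widehat{A_j})$ is a sum of only $k$ roots of unity, and the Galois action $\tau_t\bigl(\chi(\widehat{A_j})\bigr)=\chi^{t}(\widehat{A_j})$ shows that all conjugates are again values of the same shape, so the relation $(q-p)\sqrt{1+4/|S|^2}=m-2$ must hold compatibly across a whole Galois orbit. I expect this to force $\sqrt{1+4/|S|^2}$ to be rational, hence $|S|^2$ to take only finitely many values, and ultimately $p=q=1$, i.e. $m=2$. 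Treating the characters with $S=0$ (for which the identity instead forces $|a_j|^2=1$ for all $j$) and using $k>1$ to exclude the degenerate possibilities is where the delicate case analysis lives, and this combining step is the part I expect to be hardest.
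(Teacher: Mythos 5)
The paper itself contains no proof of this statement --- it is quoted as Theorem 1.2 from \cite{PS2016} --- so your proposal can only be judged on its own merits. Your ``if'' direction is complete and correct, and the reduction of the ``only if'' direction, via $(m-1)k^2=\lambda(n-1)$, to the single claim that $\lambda=1$ and $k>1$ force $m=2$ is also correct. The character-theoretic setup is sound as far as it goes: the identity $\overline{a_j}(a_j-S)=1$, the realness of $a_j\overline{S}$, the two-value structure $c_j\in\{c^+,c^-\}$, and the relation $(q-p)\sqrt{1+4/|S|^2}=m-2$ are all correctly derived, and they parallel Lemmas 3.2--3.3 of this paper (which follow \cite{MS-arXiv}).

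The genuine gap is that the entire content of the theorem sits in the step you label ``I expect this to force \dots\ $m=2$,'' and the route you point to (field norms, compatibility across a Galois orbit) is not what closes it. Rationality of $\sqrt{1+4/|S|^2}$ needs no Galois theory: for $m>2$ your displayed relation already forces $q>p$ and $\sqrt{1+4/|S|^2}=(m-2)/(q-p)\in\mathbb{Q}$; but rationality alone yields no contradiction. The missing idea is \emph{integrality}: $|S|^2=S\overline{S}$ is an algebraic integer, so once it is rational it is a positive rational integer, namely $|S|^2=4d^2/\bigl((m-2)^2-d^2\bigr)$ with $d=q-p$ and $1\le d<m-2$. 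Writing $g=\gcd(m-2,d)$, $m-2=gu'$, $d=gd'$ with $\gcd(u',d')=1$, integrality forces $(u'^2-d'^2)\mid 4$; but $u'>d'\ge 1$ gives $u'^2-d'^2\ge 3$, and $u'^2-d'^2=4$ is impossible by parity, so no nontrivial character can have $S\neq 0$ when $m\ge 3$. That settles the case you treat as the main one, but it is not yet the end: one must then run the endgame for the situation where \emph{every} nontrivial character vanishes on $\mathcal{D}=\sum_j A_j$, namely Fourier inversion (the paper's Lemma 3.1) gives $\mathcal{D}=G$, hence $n=mk$, and then $(m-1)k^2=n-1=mk-1$ forces $k\mid 1$, contradicting $k>1$. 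Your proposal defers exactly these two steps (the per-character contradiction and the all-characters-vanish endgame) as ``the part I expect to be hardest,'' so as written it sets up the right machinery but does not prove the theorem.
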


Recent paper \cite{MS-arXiv}, using character theory, has established various 
non-existence results, including the following:

\begin{theorem}[\cite{MS-arXiv}]
\label{m34}
 If there is an $(n,m,k,\lambda)$-SEDF with $k>1$, then $m\neq 3$ and $m\neq 4$.

\end{theorem}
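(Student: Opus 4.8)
The plan is to translate the SEDF condition into the group ring $\ZZ[G]$ and then apply characters, in the spirit of \cite{MS-arXiv}. Identifying each subset with the element $\sum_{a\in A_\ell}a$ of $\ZZ[G]$, writing $A=\sum_{\ell=1}^m A_\ell$ and $A_\ell^{(-1)}=\sum_{a\in A_\ell}a^{-1}$, the defining equation $\bigcup_{\ell\neq j}\Delta_E(A_\ell,A_j)=\lambda(G\setminus\{e\})$ becomes $(A-A_j)A_j^{(-1)}=\lambda G-\lambda e$ for each $j$. Applying a nontrivial character $\chi$ of $G$, so that $\chi(G)=0$, and writing $a_j=\chi(A_j)$, $s=\chi(A)=\sum_j a_j$, and $\chi(A_j^{(-1)})=\overline{a_j}$, I would obtain the key relation
\[
s\,\overline{a_j}=|a_j|^2-\lambda\qquad(1\le j\le m),
\]
so in particular $s\,\overline{a_j}$ is real for every $j$.

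Next I would analyse this system for each fixed nontrivial $\chi$. If $s=0$ then $|a_j|^2=\lambda$ for all $j$. If $s\neq0$, then $r_j:=s\,\overline{a_j}=|a_j|^2-\lambda\in\mathbb R$ gives $a_j=r_j/\overline{s}$, hence $|a_j|^2=r_j^2/|s|^2$, and substituting back shows that each $r_j$ is a root of the single quadratic
\[
r^2-|s|^2 r-\lambda|s|^2=0,
\]
whose roots satisfy $r_+>0>r_-$, $\ r_++r_-=|s|^2$, and $r_+r_-=-\lambda|s|^2$. Summing $a_j=r_j/\overline{s}$ over $j$ yields $|s|^2=\sum_j r_j$. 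Letting $p$ be the number of indices with $r_j=r_+$, the identity $pr_++(m-p)r_-=r_++r_-$ rearranges to $(p-1)r_+=(p+1-m)r_-$. For $m\in\{3,4\}$ I would check the finitely many values $0\le p\le m$: together with $r_+>0>r_-$ and $r_++r_-=|s|^2>0$ each case is contradictory (for $m=4$ the only nontrivial possibility $p=2$ forces $r_++r_-=0$). Hence for $m\in\{3,4\}$ the case $s\neq0$ cannot occur, so $\chi(A)=0$ for every nontrivial $\chi$.

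To finish, I would observe that $\chi(A)=0$ on all nontrivial characters and $\chi_0(A)=mk$ force $A=\tfrac{mk}{n}G$ in $\mathbb C[G]$; since the blocks are pairwise disjoint, $A$ has $0$/$1$ coefficients, so this is possible only if $mk=n$ and $A_1,\dots,A_m$ partition $G$. Substituting $n=mk$ into \eqref{mk2=lambda(n-1)} gives $(m-1)k^2=\lambda(mk-1)$, whence $(mk-1)\mid(m-1)k^2$. Because $mk-1\equiv-1\pmod k$ we have $\gcd(mk-1,k)=1$, so $(mk-1)\mid(m-1)$; but for $k>1$ one has $0<m-1<mk-1$, a contradiction. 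This disposes of both $m=3$ and $m=4$ simultaneously.

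The main obstacle is the second paragraph: ruling out $s\neq0$. Everything downstream is a short divisibility argument, but the quadratic-root dichotomy and the casework on $p$ are exactly where the special values $m=3,4$ enter, and care is needed to confirm that $r_+$ and $r_-$ are genuinely distinct (the discriminant $|s|^4+4\lambda|s|^2$ is positive) and of opposite sign, so that the counting identity $|s|^2=\sum_j r_j$ yields a contradiction rather than a consistent configuration as it would for larger $m$.
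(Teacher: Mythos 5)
Your proof is correct, and since the paper only quotes this theorem from \cite{MS-arXiv} without reproving it, the right comparison is with the character-theoretic machinery it rebuilds in Section 3 --- where your argument, though in the same framework, takes a genuinely different route at the key step. The paper's Lemma \ref{chi(Dj)} shows the real numbers $\alpha_j=\chi(A_j)/\chi(A)$ take at most two values via the pairwise relation (\ref{alpha1}), which needs two further indices besides $r,s$ and hence the hypothesis $m>3$; Lemma \ref{HighlyUnevern} then excludes the splits $(0,m)$, $(1,m-1)$, $(m/2,m/2)$, which covers $m=4$ (those are all possible splits) but cannot reach $m=3$. Your quadratic trick --- each $r_j=s\overline{a_j}$ satisfies the fixed equation $r^2-|s|^2r-\lambda|s|^2=0$, whose coefficients do not depend on $j$ --- delivers the two-value dichotomy for \emph{every} $m$, with $r_+>0>r_-$ and $r_++r_-=|s|^2$ built in, so one short count disposes of $m=3$ and $m=4$ uniformly; in the paper's notation $\alpha_j=r_j/|s|^2$, and your identity $\sum_j r_j=|s|^2$ is exactly equation (\ref{a alpha+ b beta=1}). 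Your endgame also matches the paper's tools: $\chi(A)=0$ for all nontrivial $\chi$ forces $A=\frac{mk}{n}G$ (the Fourier-inversion argument of Lemma \ref{ExistenceNonPrincipal}), hence $n=mk$ since $A$ has $0/1$ coefficients, and the finish via $\gcd(mk-1,k)=1$, $(mk-1)\mid(m-1)$, $0<m-1<mk-1$ is the same divisibility trick as Lemma \ref{NC1}. I checked the casework on $p$ for $m\in\{3,4\}$ and it is complete, and you correctly flagged the two delicate points (positive discriminant, so $r_+\neq r_-$; and $p=2$, $m=4$ forcing $|s|^2=0$); what your version buys is a uniform treatment of both values of $m$, whereas the lemmas as stated in this paper, being restricted to $m>3$, would only yield $m\neq 4$.
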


\begin{theorem}[\cite{MS-arXiv}]\label{t.stinson}
If $G$ is any group of prime order, $k > 1$ and $m > 2$, then there does not exist  an $(n, m, k, \lambda)$-SEDF over $G$.
\end{theorem}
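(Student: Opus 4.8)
The plan is to convert the SEDF condition into identities on character sums over the cyclotomic field $\mathbb{Q}(\zeta_p)$, $p=n$, and then to exploit the fact that for prime order all nontrivial character values form a single Galois orbit. Let $\chi$ be a nontrivial character of $G$, set $a_j=\chi(A_j)$ and $S=\sum_\ell a_\ell=\chi(A)$ with $A=\bigcup_j A_j$, and write $A_j^{(-1)}=\{x^{-1}:x\in A_j\}$. Applying $\chi$ to the group-ring form $\sum_{\ell\neq j}A_\ell A_j^{(-1)}=\lambda(G-e)$ of the definition and using $\chi(G)=0$ gives, for every $j$,
\[ |a_j|^2-S\,\overline{a_j}=\lambda \qquad (\ast). \]
Subtracting $(\ast)$ from its conjugate shows $\overline{S}\,a_j\in\mathbb{R}$, so when $S\neq0$ every $a_j$ lies on the line $\mathbb{R}\cdot S$; writing $a_j=t_jS$ with $t_j\in\mathbb{R}$, $(\ast)$ reduces to $t_j^2-t_j-\lambda/|S|^2=0$, so each $t_j$ is one of the two roots $t_\pm=\tfrac12\bigl(1\pm\sqrt{1+4\lambda/|S|^2}\bigr)$. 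Since $\sum_j a_j=S$ forces $\sum_j t_j=1$, if exactly $d$ of the $t_j$ equal $t_+$ one gets $(2d-m)\delta=(2-m)/2$ with $\delta=\tfrac12\sqrt{1+4\lambda/|S|^2}>\tfrac12$, which forces $2\le d<m/2$ and pins $|S|^2$ to the rational number $\lambda(m-2d)^2/\bigl((d-1)(m-d-1)\bigr)$. Thus a nontrivial character with $S\neq0$ can exist only when $m\ge5$; for $m=3,4$ every nontrivial character therefore has $S=0$ (in agreement with Theorem \ref{m34}), and in all cases $|\chi(A)|^2\in\mathbb{Q}$.

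Next I would use primality. The nontrivial characters are the powers $\chi^t$, $t\in\{1,\dots,p-1\}$, and since $\chi^t(B)=\sigma_t(\chi(B))$ for $\sigma_t\in\mathrm{Gal}(\mathbb{Q}(\zeta_p)/\mathbb{Q})$, $\sigma_t(\zeta_p)=\zeta_p^{t}$, we have $|\chi^t(B)|^2=\sigma_t\bigl(|\chi(B)|^2\bigr)$ for any subset $B$. Hence the numbers $|\chi^t(A)|^2$ are exactly the Galois conjugates of $N:=|\chi(A)|^2$; as they are all rational, $N$ has a linear minimal polynomial, so $N\in\mathbb{Q}$ and $|\chi^t(A)|^2=N$ for all $t$. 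If $N=0$ then $\chi(A)=0$ for every nontrivial character, forcing $A=G$ and $mk=p$, which is impossible since $p$ is prime while $k>1$ and $m>2$; this is the only point at which the hypothesis $k>1$ is needed, and it shows we may assume $N>0$.

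With $|S|^2=N$ now independent of the character, the two roots $r>s>0$ of $x^2-(2\lambda+N)x+\lambda^2=0$ are global, and each $|\chi^t(A_j)|^2$ equals $r$ or $s$. Summing $(\ast)$ over $j$ gives $\sum_j|\chi^t(A_j)|^2=m\lambda+N$, so for every $t$ precisely $d$ of the sets satisfy $|\chi^t(A_j)|^2=r$, with $d$ the same integer $2\le d<m/2$ as before. For a fixed $j$, Parseval gives $\sum_{t=1}^{p-1}|\chi^t(A_j)|^2=k(p-k)$; writing $e_j=\#\{t:|\chi^t(A_j)|^2=r\}$, this reads $e_jr+(p-1-e_j)s=k(p-k)$, so $e_j$ takes one common value $e$ for all $j$, and double counting the pairs $(j,t)$ with $|\chi^t(A_j)|^2=r$ yields $me=(p-1)d$.

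The contradiction comes from one last appeal to primality: $|\chi(A_j)|^2$ lies in the real cyclotomic field and all its conjugates lie in $\{r,s\}$, so its minimal polynomial divides $(x-r)(x-s)$ and $e_j\in\{0,(p-1)/2,p-1\}$ according as $|\chi(A_j)|^2$ equals $s$, is irrational, or equals $r$. Hence $e\in\{0,(p-1)/2,p-1\}$ and $d=me/(p-1)\in\{0,m/2,m\}$, each of which contradicts $2\le d<m/2$. I expect the crux to be the two global steps rather than any single computation: showing first that $|\chi(A)|^2$ is constant and then that the per-set counts $e_j$ are equal and confined to three values, both of which rest on the orbit structure special to prime order. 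The places to watch are the sign bookkeeping that identifies $r$ with the root belonging to $t_+$, and the verification that the window $2\le d<m/2$ genuinely excludes $d\in\{0,m/2,m\}$ for every $m\ge5$.
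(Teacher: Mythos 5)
Your proposal is correct, and after the shared opening it follows a genuinely different route from the paper. Both arguments start identically: applying a non-principal character to the group-ring form of the SEDF condition gives your identity $(\ast)$, from which the real proportionality $\chi(A_j)=t_j\chi(A)$ and the two-value structure follow; this is exactly the content of Lemmas~\ref{chi(Dj)} and~\ref{HighlyUnevern} (your quadratic derivation even yields the two-value property without assuming $m>3$, and your window $2\le d<m/2$ is precisely the paper's exclusion of the splits $(0,m)$, $(1,m-1)$, $(m/2,m/2)$). The divergence is in how primality is exploited. The paper's prime-order lemma (the one concluding $(m-2)k\equiv 0\pmod p$) uses the rigidity of vanishing sums of $p$-th roots of unity: since the minimal polynomial of a primitive $p$-th root is the full cyclotomic polynomial, the relation $\chi\bigl(\sum_{\ell\neq r,s}D_\ell\bigr)=0$ forces $\sum_{\ell\neq r,s}D_\ell=\theta' G$, whence $p\mid (m-2)k$, and then Theorem~\ref{NC} plus the Remark give $m\equiv 2\pmod p$, impossible since $mk\le p$ and $k\ge 2$ force $m<p$; the cases $m=3,4$ must be handled separately by quoting Theorem~\ref{m34}. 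You never invoke the vanishing-sum theorem. Instead you use the Galois action on $\mathbb{Q}(\zeta_p)$ twice: first, since each $|\chi^t(A)|^2$ is rational (pinned by the split equation) and the nontrivial characters form one Galois orbit, $N=|\chi(A)|^2$, the roots $r,s$ of $x^2-(2\lambda+N)x+\lambda^2$, and the count $d$ become character-independent; second, Parseval and double counting give $me=(p-1)d$, while the fact that each $|\chi(A_j)|^2$ has degree at most $2$ inside a cyclic extension of degree $p-1$ forces $e\in\{0,(p-1)/2,p-1\}$, hence $d\in\{0,m/2,m\}$, contradicting $2\le d<m/2$. As to what each approach buys: the paper's argument is shorter and, because it is phrased for the system (\ref{EDE}) in a quotient group, it scales up to composite orders (Theorem~\ref{NC} and the $p^2$ and $pq$ nonexistence theorems); your argument is self-contained even for $m=3,4$ (there every character has $S=0$, forcing $N=0$, hence $A=G$ and $mk=p$, impossible for prime $p$ with $k>1$, $m>2$) and needs only basic Galois theory of cyclotomic fields plus Parseval, but it is intrinsically tied to prime order, since it needs all nontrivial characters to lie in a single Galois orbit. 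The two points you flagged as delicate are indeed fine: $r=t_+^2N$ because $t_++t_-=1>0$ gives $|t_+|>|t_-|$, and $d\in\{0,m/2,m\}$ is excluded by $2\le d<m/2$ for every $m\ge 5$, while $m=3,4$ never reach that stage.
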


Recent paper \cite{HP-arXiv} gave some necessary conditions for the existence of SEDFs with $\lambda \geq 2$ as follows.

\begin{theorem}[\cite{HP-arXiv}]
\label{NC2}
Suppose there exists an $(n, m, k, \lambda)$-SEDF with $m\geq 3$ and $k> \lambda \geq 2$. Then the
following inequality must hold:
$$\frac{\lambda(k-1)(m-2)}{(\lambda-1)k(m-1)}\leq 1.$$
\end{theorem}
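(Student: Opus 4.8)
The plan is to reduce the stated inequality to a transparent pair-counting identity for the differences \emph{into} a single block, and then to exploit the defining condition twice: once as stated, and once after inverting every difference. Since $\lambda\ge 2$, the factors $\lambda-1$, $k$, $m-1$ are all positive, so the claim is equivalent to $\lambda(k-1)(m-2)\le(\lambda-1)k(m-1)$, and after multiplying through by $k$ it suffices to prove
\[
\lambda k(k-1)(m-2)\le (\lambda-1)(m-1)k^{2}.
\]
The right-hand side is exactly the quantity I will produce by a second-moment count, so the argument is arranged so that these two numbers can be compared directly.

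Fix a block $A_j$ and set $U=\bigcup_{\ell\ne j}A_\ell$, a set of size $(m-1)k$, and for $g\ne e$ let $\delta_U(g)$ be the number of pairs $(u,u')\in U\times U$ with $uu'^{-1}=g$. First I would record two forms of the defining condition: the given one, $\bigcup_{\ell\ne j}\Delta_E(A_\ell,A_j)=\lambda(G\setminus\{e\})$, and its inverse, which (because $\lambda(G\setminus\{e\})$ is invariant under $g\mapsto g^{-1}$) yields the companion statement $\bigcup_{\ell\ne j}\Delta_E(A_j,A_\ell)=\lambda(G\setminus\{e\})$. Next, observing that for each $y\in A_j$ the translate $Uy^{-1}$ misses $e$ and that exactly $\lambda$ of the $k$ translates $\{Uy^{-1}:y\in A_j\}$ contain any fixed $g\ne e$, I would count, over $g\ne e$, the number of ordered pairs $(y,y')$ with $g\in Uy^{-1}\cap Uy'^{-1}$ in two ways. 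Using $|Uy^{-1}\cap Uy'^{-1}|=\delta_U(yy'^{-1})$ together with $\lambda(n-1)=(m-1)k^2$, this gives the identity
\[
\sum_{y\ne y'\in A_j}\delta_U(yy'^{-1})=(m-1)k^{2}(\lambda-1).
\]

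The second ingredient is a structural formula for $\delta_U(g)$ when $g\ne e$. Splitting the pairs of $U$ according to the blocks containing their two entries, the same-block pairs form a nonnegative internal-difference count $I_U(g)=\#\{(u,u')\in U\times U:\ uu'^{-1}=g,\ u,u'\ \text{in the same block}\}\ge 0$, while the distinct-block pairs contribute the total external count over all ordered pairs of blocks, namely $m\lambda$ (recall an SEDF is an $(n,m,k,m\lambda)$-EDF), minus those pairs that use $A_j$ as a source or as a target. By the two conditions above, each of those two families contributes exactly $\lambda$, so
\[
\delta_U(g)=(m-2)\lambda+I_U(g),\qquad g\ne e.
\]
Substituting this into the previous identity, discarding the nonnegative term $\sum_{y\ne y'}I_U(yy'^{-1})$, and using that there are $k(k-1)$ ordered pairs $y\ne y'$ in $A_j$, I obtain $k(k-1)(m-2)\lambda\le(m-1)k^{2}(\lambda-1)$, which is the target inequality.

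I expect the structural formula for $\delta_U(g)$ to be the crux: the point is to recognize that the external differences among the $m-1$ blocks making up $U$ are perfectly uniform with multiplicity $(m-2)\lambda$, which is what lets the count collapse. Obtaining this requires the inverse form of the definition, so that precisely the contributions passing through $A_j$ (once as first coordinate, once as second) can be peeled off; without the per-source condition one cannot isolate the $(m-2)\lambda$ term. The nonnegativity of $I_U$ then does the rest, and it is worth remarking that equality in the theorem forces $I_U(yy'^{-1})=0$ for all $y\ne y'\in A_j$, that is, no block $A_\ell$ with $\ell\ne j$ shares an internal difference with $A_j$ — a rigidity condition useful for ruling out borderline parameter sets.
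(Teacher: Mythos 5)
The paper never proves this statement: it appears as Theorem 1.5, quoted from \cite{HP-arXiv} as a known result, so there is no internal proof to compare yours against; I can only assess your argument on its own terms, and it is correct. The reduction to $\lambda k(k-1)(m-2)\le(\lambda-1)(m-1)k^{2}$ is legitimate because $\lambda\ge 2$, $m\ge 3$ make every factor positive. The ``inverse form'' of the defining condition holds since $\Delta_E(A_j,A_\ell)=\Delta_E(A_\ell,A_j)^{-1}$ and the multiset $\lambda(G\setminus\{e\})$ is inversion-invariant. Your key identity $|Uy^{-1}\cap Uy'^{-1}|=\delta_U(yy'^{-1})$ is where commutativity of $G$ enters (the bijection $g\mapsto(gy,gy')$ sends common elements to pairs with difference $yy'^{-1}$ only in the abelian setting), which is fine since the paper defines SEDFs over abelian groups. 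The double count is exact: each $g\ne e$ lies in exactly $\lambda$ of the $k$ translates $Uy^{-1}$, $y\in A_j$, each translate avoids $e$ because the blocks are disjoint, so summing $\lambda(\lambda-1)$ ordered pairs over the $n-1$ nonidentity elements and using $(m-1)k^{2}=\lambda(n-1)$ (equation (1.1) of the paper) yields $(m-1)k^{2}(\lambda-1)$. The structural formula $\delta_U(g)=(m-2)\lambda+I_U(g)$ for $g\ne e$ is also exact: the total external multiplicity over all ordered pairs of distinct blocks is $m\lambda$ (the paper itself notes an $(n,m,k,\lambda)$-SEDF is an $(n,m,k,m\lambda)$-EDF), and the two families of pairs passing through $A_j$ (as target, as source) each contribute exactly $\lambda$ by the two forms of the definition; these families are disjoint, so no double subtraction occurs. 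Discarding $\sum_{y\ne y'}I_U(yy'^{-1})\ge 0$ then gives $\lambda k(k-1)(m-2)\le(\lambda-1)(m-1)k^{2}$ as required. Your closing observation on the equality case (equality forces every internal difference of $A_j$ to be absent from the internal differences of all other blocks) is a correct and worthwhile by-product that the stated theorem does not record.
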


In \cite{HP-arXiv}, Huczynska and Paterson pointed out that an $(n, m, k, 2)$-SEDF can exist only when $m = 2$ and gave an infinite classes of SEDFs.

\begin{theorem}
[\cite{HP-arXiv}]
For any prime power $q$ with $q\equiv  1 \pmod 4$, there exists a $(q, 2, \frac{q-1}{2},\frac{q-1}{4})$-SEDF over the finite field $GF(q)$.
\end{theorem}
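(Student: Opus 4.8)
The plan is to
construct an explicit $(q,2,\frac{q-1}{2},\frac{q-1}{4})$-SEDF inside the additive (or multiplicative) group of $GF(q)$ using cyclotomic classes, so I would work with $G=(GF(q),+)$ and build the two blocks $A_1,A_2$ from the quadratic residues and quadratic nonresidues. Let $g$ be a primitive element of $GF(q)$; since $q\equiv 1\pmod 4$, the set of nonzero squares $C_0=\{g^{2i}\}$ and the set of nonsquares $C_1=\{g^{2i+1}\}$ each have size $\frac{q-1}{2}=k$, and $-1$ is a square. The natural guess is to take $A_1=C_0$ and $A_2=C_1$. To verify the SEDF property I must check for each $j\in\{1,2\}$ that the single external difference multiset $\Delta_E(A_\ell,A_j)$ (with $\ell\neq j$) equals $\lambda(G\setminus\{0\})$ with $\lambda=\frac{q-1}{4}$.

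First I would translate the multiset condition into counting: for a fixed nonzero $d\in GF(q)$, the multiplicity of $d$ in $\Delta_E(A_2,A_1)=\{x-y:x\in C_1,\ y\in C_0\}$ is the number of representations $d=x-y$ with $x$ a nonsquare and $y$ a nonzero square. This is exactly a cyclotomic-number count: the quantities $(i,j)_2$ recording how often an element of $C_i$ plus $d$ lands in $C_j$. The goal reduces to showing this representation count is independent of $d$ and equals $\frac{q-1}{4}$. The key computational input is the classical evaluation of the cyclotomic numbers of order $2$: when $q\equiv 1\pmod 4$ one has $(0,0)_2=\frac{q-5}{4}$ and $(0,1)_2=(1,0)_2=(1,1)_2=\frac{q-1}{4}$. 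Because $-1\in C_0$, the difference $x-y$ with $x\in C_1,y\in C_0$ behaves symmetrically, and these cyclotomic identities should deliver the constant multiplicity $\frac{q-1}{4}$ directly, for both $j=1$ and $j=2$.

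The cleanest way to organize the verification — and the route that matches the character-theoretic theme of the paper — is to pass to group-ring or character sums. Writing $\widehat{C_i}$ for the image of $C_i$ under a nontrivial additive character $\psi$, the multiset identity $\Delta_E(A_2,A_1)=\lambda(G\setminus\{e\})$ is equivalent to the character equation $\psi(A_2)\overline{\psi(A_1)}=-\lambda$ for every nontrivial $\psi$ (the $-\lambda$ coming from the missing identity term), while the principal character gives $k^2$. Here $\psi(C_i)$ is a Gauss-type sum, and the two half-period sums $\eta_0=\psi(C_0),\eta_1=\psi(C_1)$ are the Gaussian periods, which for order $2$ satisfy $\eta_0+\eta_1=-1$ and $\eta_0\eta_1=\frac{1-q}{4}$ (using $q\equiv1\pmod4$ so that the relevant period polynomial has these coefficients). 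Computing $\eta_1\overline{\eta_0}$ and checking it equals $-\frac{q-1}{4}$ then finishes the proof, and the same computation with the roles swapped handles $j=2$.

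The main obstacle I anticipate is bookkeeping with signs and complex conjugation: one must pin down precisely how $-1$ acts on $C_0,C_1$ (it fixes both classes when $q\equiv1\pmod4$, so $\overline{\eta_i}=\eta_i$), and must confirm that the period polynomial really has the claimed constant term $\frac{1-q}{4}$ rather than a shifted value, since an error of $1$ here would destroy the exact $\lambda=\frac{q-1}{4}$. I would therefore spend the most care on establishing the two symmetric functions of $\eta_0,\eta_1$ and on the conjugation identity, after which both the $j=1$ and $j=2$ multiset equations follow from one short Gaussian-period computation together with the inversion theorem for characters.
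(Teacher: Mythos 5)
Your proposal is correct and is essentially the paper's own approach: taking $A_1=C_0^2$ (squares) and $A_2=C_1^2$ (nonsquares) and reducing the multiset condition to the order-$2$ cyclotomic numbers $(0,1)_2=(1,1)_2=\frac{q-1}{4}$ is exactly Construction \ref{SEDF-Cyclotomic} specialized to $e=2$, and the values you quote are the correct classical ones. Your Gaussian-period computation ($\eta_0+\eta_1=-1$, $\eta_0\eta_1=\frac{1-q}{4}$, $\overline{\eta_i}=\eta_i$ since $-1\in C_0^2$) is just an equivalent character-sum restatement of the same verification, and it goes through as you describe.
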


Martin and Stinson showed that there does not exist an $(n, m,k,\lambda)$-SEDF with $n=mk$ and $k>1$ \cite{MS-arXiv}. This can be improved as follows.

\begin{lemma}
\label{NC1}
There does not exist an $(n, m,k,\lambda)$-SEDF with  $k>1$ and $n$ being divisible by $k$.
\end{lemma}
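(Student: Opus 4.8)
The plan is to obtain a contradiction using only the elementary counting identity \eqref{mk2=lambda(n-1)} together with the bound $\lambda<k$ supplied, for $k>1$, by the first lemma quoted above; no character theory is required. So I would argue by contradiction, supposing that an $(n,m,k,\lambda)$-SEDF with $k>1$ and $k\mid n$ exists, and writing $n=tk$ for some positive integer $t$ (note that the hypothesis $mk\le n$ then gives $m\le t$, though this will not be needed).

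The key step is to read the identity $(m-1)k^2=\lambda(n-1)$ modulo $k$. The left-hand side is a multiple of $k$, while $n-1=tk-1\equiv-1\pmod k$, so that $\lambda(n-1)\equiv-\lambda\pmod k$. Comparing the two sides forces $k\mid\lambda$, and since $\lambda$ is a positive integer this yields $\lambda\ge k$. On the other hand, because $k>1$, the first lemma quoted above guarantees $\lambda<k$. These two inequalities are incompatible, which is the desired contradiction.

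I do not expect a genuine obstacle here: the statement strictly generalizes the Martin--Stinson nonexistence result for $n=mk$ (the special case $t=m$), and the whole argument reduces to the single congruence $n\equiv0\pmod k$. If there is any subtlety at all, it lies in recognizing that the forced divisibility $k\mid\lambda$ is ruled out precisely by the previously established bound $\lambda<k$, so that the two known ingredients combine into a one-line proof rather than requiring the heavier machinery used elsewhere in the paper.
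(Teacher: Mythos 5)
Your proof is correct. Reducing \eqref{mk2=lambda(n-1)} modulo $k$ with $n\equiv 0\pmod k$ does force $k\mid\lambda$, hence $\lambda\geq k$, which contradicts the bound $\lambda<k$ that the Huczynska--Paterson dichotomy (Lemma 1.1) gives for $k>1$. The paper starts from the same identity but closes differently and stays self-contained: since $k\mid n$ implies $\gcd(k,n-1)=1$, it extracts the stronger divisibility $k^2\mid\lambda$, writes $\lambda=tk^2$ so that the identity becomes $m-1=t(n-1)$; as $m\leq n$ this forces $t=1$ and $m=n$, and the disjointness bound $n\geq mk$ then yields $k=1$, a contradiction. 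So the paper never invokes the bound $\lambda<k$, using only \eqref{mk2=lambda(n-1)} together with $n\geq mk$, whereas you import that bound as a black box and obtain a one-line finish. The difference is mostly one of packaging rather than substance: the bound $\lambda<k$ is itself an easy consequence of the same two ingredients, since $\lambda=(m-1)k^2/(n-1)\leq (m-1)k^2/(mk-1)<k$ precisely when $k>1$. Your route is shorter and makes transparent that the single congruence $n\equiv 0\pmod k$ is what kills existence; the paper's route has the mild advantage of not depending on the quoted external lemma, and its intermediate conclusion $m=n$ explains structurally why the $n=mk$ case of Martin--Stinson is the boundary case being generalized.
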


\proof From (\ref{mk2=lambda(n-1)}), we have $(m-1)k^2=\lambda(n-1)$. As $n$ is divisible by $k$, we have gcd$(k,n-1)=1$, thereby $\lambda$ is divisible by $k^2$. Let $\lambda =tk^2$ for some positive integer $t$, and hence $m-1=t(n-1)$. This shows that $t=1$ and $n=m$. As an SEDF contains $m$ pairwise disjoint subsets, we have $n\geq mk$. This shows that $k=1$. \qed

In this paper, we also use character theory to give some new necessary conditions for the existence of SEDFs and present new families of SEDFs by using cyclotomic classes.

The rest of this paper is organized as follows. In Section 2, we  review some basic facts about characters of finite abelian groups. In Section 3, we use character theory to present some new necessary conditions for the existence of SEDFs and to give some non-existence results for SEDFs. In Section 4, cyclotomic classes are used to present two new classes of SEDFs with $m=2$. Finally, Section 5 concludes this paper.


\section{Preliminaries}

We briefly review some basic facts about characters of finite abelian groups. These can be found, for example,
in \cite{Ledermann1987}.

For a finite abelian group $G$, there are exactly $n = |G|$ distinct homomorphisms (called {\em characters}) from $G$ to the multiplicative
group of complex numbers. In particular, the character $\chi_0\colon G\rightarrow \mathbb{C}$ defined by $\chi_0(g)=1$ for all $g\in G$ is called the {\em principal character}.
When $G$ is abelian, the product of characters $(\chi\psi)(g) =\chi(g)\psi(g)$ is again a character and, provided $G$ is finite, this gives us a group $\widehat{G}$ of characters isomorphic to $G$. So we can label the $|G|$ distinct
characters $\{\chi_g\colon g\in G\}$.

These characters $\chi, \psi, \ldots$ satisfy the following ``orthogonality" relations: 
\[
\sum\limits_{g\in G}\chi(g)\overline{\psi(g)}
=\left \{ \begin{array}{ll}
n & {\rm if}\ \chi=\psi,\smallskip \\
0 & {\rm otherwise}.
\end{array}
\right .
\] and
\[
\sum\limits_{\chi\in \widehat{G}}\chi(g)\overline{\chi(h)}
=\left \{ \begin{array}{ll}
n & {\rm if}\ g=h,\smallskip \\
0 & {\rm otherwise}.
\end{array}
\right .
\]
So, for any non-principal character, we have
$\sum_{g\in G}
\chi(g) = 0$. Likewise
$\sum_{g\in G}\chi_g(h)= 0$ unless $h$ is the identity $e$
of $G$, in which case the sum equals $n$.

Let $\mathbb{Z}$ denote the ring of integers. The group ring $\mathbb{Z}[G]$ is defined to be the ring of formal sums
$$\mathbb{Z}[G]=\left \{ \sum\limits_{g\in G} a_gg\colon a_g\in \mathbb{Z}\right \},$$ where the addition is given by
$$\sum\limits_{g\in G}a_gg+\sum\limits_{g\in G}b_gg=\sum\limits_{g\in G}(a_g+b_g)g,$$
and the multiplication is defined by
$$\left (\sum\limits_{g\in G}a_gg\right )\left (\sum\limits_{g\in G}b_gg\right )=\sum\limits_{h\in G}\left (\sum\limits_{g\in G}a_gb_{hg^{-1}} \right )h.$$
If $S$ is a multiset of $G$ where for $g\in G$, $g$ occurs in $S$ exactly $a_g$ times, we will identify $S$ with the group ring element $S=\sum_{g\in G}a_gg$.
Also, $S^{-1}=\sum_{g\in G}a_gg^{-1}$.

Each $\chi$ extends to an algebra homomorphism from the group algebra $\mathbb{Z}[G]$ to $\mathbb{C}$ by
$$\chi\left(\sum_{g\in G}a_gg\right )=\sum_{g\in G}a_g\chi(g).$$

Let $A_1,\ldots, A_m$ be an $(n,m,k,\lambda)$-SEDF over $G$ and let $H$ be a subgroup of $G$. Let $\sigma$ be a natural homomorphisms from $G$ to the quotient group $G/H$.
Define $D_j=\sigma(A_j)$. Then $D_j\in \mathbb{Z}[G/H]$ and $\sum\limits_{\ell\neq j}D_jD_{\ell}^{-1}=\lambda |H| (G/H)-\lambda (H)$ for $1\leq j\leq m$. We shall use character theory to find some properties on $D_1, \ldots, D_m$.


\section{New nonexistence results for SEDFs}

In this section, we use character theory to give some new nonexistence results for SEDFs. 
Our proofs of Lemmas 3.1-3.3 are very similar to the proofs in [8]. The only difference is that we prove the results in the quotient group. We find that their proofs are not only suitable for the SEDFs, but also work for the images of the SEDFs in the quotient group. However, we still include these proofs for the completeness and for the convenience of readers.

Let $G$ be an abelian group of order $n$ with the identity $e$ and let  $D_j=\sum_{g\in G}a_{j,g}g\in \mathbb{Z}[G]$, $1\leq j\leq m$,
 satisfy
\begin{equation}
\label{EDE}
\left \{
\begin{array}{l}
a_{j,g}\geq 0\ {\rm for}\ g\in G, \smallskip \\
\sum_{g\in G}a_{j,g}=k, \smallskip \\
\sum\limits_{\ell\neq j}D_jD_{\ell}^{-1}=\lambda G-\mu e\ {\rm for}\ 1\leq j\leq m,
\end{array}
\right .
\end{equation}
where $\lambda, \mu$ are positive integers and $\lambda \geq \mu$.

\begin{lemma}
\label{ExistenceNonPrincipal}
Let $D\in \mathbb{Z}[G]$. Except when  $D= tG$ for some nonnegative integer $t$,  there is at least one non-principal character $\chi$ of $G$ satisfying ${\chi(D)}\neq 0$.
\end{lemma}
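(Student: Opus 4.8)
The plan is to prove the contrapositive: assuming $\chi(D)=0$ for every non-principal character $\chi$, I will show that $D$ must be a constant multiple of $G=\sum_{g\in G}g$. Write $D=\sum_{g\in G}a_gg$ with $a_g\in\mathbb{Z}$. The natural tool is the second orthogonality relation recalled in Section 2, namely $\sum_{\chi\in\widehat{G}}\chi(g)\overline{\chi(h)}=n$ if $g=h$ and $0$ otherwise, which yields a Fourier inversion formula recovering each coefficient $a_g$ from the character values $\{\chi(D)\}_{\chi\in\widehat{G}}$.

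First I would establish the inversion formula. For a fixed $g\in G$, interchanging the order of summation gives
$$\sum_{\chi\in\widehat{G}}\chi(D)\overline{\chi(g)}=\sum_{\chi\in\widehat{G}}\sum_{h\in G}a_h\,\chi(h)\overline{\chi(g)}=\sum_{h\in G}a_h\sum_{\chi\in\widehat{G}}\chi(h)\overline{\chi(g)}=n\,a_g,$$
so that $a_g=\frac{1}{n}\sum_{\chi\in\widehat{G}}\chi(D)\overline{\chi(g)}$ for every $g\in G$.

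Next I would substitute the hypothesis. Since $\chi(D)=0$ for all $\chi\neq\chi_0$, the only surviving term in the inversion formula is the principal one, giving $a_g=\frac{1}{n}\chi_0(D)\overline{\chi_0(g)}=\frac{1}{n}\chi_0(D)$ for every $g$, because $\chi_0\equiv 1$. Setting $t:=\frac{1}{n}\chi_0(D)=\frac{1}{n}\sum_{h\in G}a_h$, I conclude that all coefficients coincide and equal $t$, hence $D=tG$. Integrality of $t$ follows from $a_g\in\mathbb{Z}$, and in the intended setting—where $D$ arises with nonnegative coefficients $a_g\geq 0$, as for the group-ring elements in (\ref{EDE})—one has $\sum_h a_h\geq 0$, so $t$ is the claimed nonnegative integer.

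I do not expect a serious obstacle here: the argument is a direct application of the orthogonality relations, and the only point requiring care is the bookkeeping in the inversion sum together with the observation that the nonnegativity of $t$ stems from the nonnegativity of the coefficients, not from the character condition alone. (Indeed, $D=-G$ satisfies $\chi(D)=0$ for all non-principal $\chi$ yet has $t=-1$, so without a nonnegativity assumption the character condition forces only $t\in\mathbb{Z}$; this is the single subtlety to flag when invoking the lemma.)
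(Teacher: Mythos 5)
Your proof is correct and is essentially the same argument as the paper's: the paper phrases the Fourier inversion via the matrix identity $F\overline{F}^{T}=nI_n$ and multiplies the vector of character values by $F^{-1}=\frac{1}{n}\overline{F}^{T}$, which is exactly your coefficient-by-coefficient orthogonality computation $a_g=\frac{1}{n}\sum_{\chi}\chi(D)\overline{\chi(g)}$. Your closing remark about nonnegativity is a fair observation (the paper's proof also only yields $t=\frac{1}{n}\sum_j a_j\in\mathbb{Z}$, with $t\geq 0$ coming from the nonnegativity of the coefficients in the intended application), but it does not change the substance of the argument.
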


\proof Let $G=\{g_1,\ldots g_n\}$ where $g_1=e$ and $D=a_1g_1+\cdots +a_ng_n$.
Let $F$ be the ``Fourier matrix" whose rows are indexed by the elements of $G$, whose columns are indexed by the characters of $G$ and whose $(g,\chi)$-element is $\chi (g)$. Without loss of generality, let the first row is indexed by the identity of $G$ and the first row is indexed by the principal character. It is well known from the orthogonal relations that $F$ is invertible and $F\overline{F}^T=nI_n$ where $\overline{F}^T$ means the transposition of $\overline{F}$ and $I_n$ denotes the identity matrix of order $n$.

Suppose that $D\neq \emptyset$ and $\chi(D)=0$ for any non-principal character $\chi$ of $G$. Then $$(\chi_{g_1}(D), \chi_{g_2}(D),\ldots \chi_{g_n}(D))=\left(\sum_{1\leq j\leq n}a_{j},0,\ldots,0\right).$$
Since $(\chi_{g_1}(D), \chi_{g_2}(D),\ldots \chi_{g_n}(D))=(a_1,a_2,\ldots, a_n)F$, we have
\[
\begin{array}{ll}
(a_1,a_2,\ldots, a_n)&=(\sum\limits_{1\leq j\leq n}a_{j},0,\ldots,0)F^{-1} \smallskip \\
&=(\sum_{1\leq j\leq n}a_{j},0,\ldots,0)\cdot \frac{1}{n} \overline{F}^T \smallskip \\
&=\left (\frac{\sum\limits_{1\leq j\leq n}a_{j}}{n},\ldots,\frac{\sum\limits_{1\leq j\leq n}a_{j}}{n} \right ).
\end{array}
\]
It follows that $D=\frac{\sum_{1\leq j\leq n}a_{j}}{n}G$. \qed

\begin{lemma}
\label{chi(Dj)}
Let  $D_1,\ldots,D_m\in \mathbb{Z}[G]$ satisfy {\rm (\ref{EDE})} and $\D=\sum_{1\leq j\leq m}D_j$. Suppose that a non-principal character $\chi$ of $G$ satisfies $\chi(\D)\neq 0$. Then there exist nonzero real numbers $\alpha_1,\ldots,\alpha_m$ such that $\chi(D_j)=\alpha_j\chi(\D)$ for $j=1,\ldots,m$. Further, for $m>3$, there can be only two possible  values for $\alpha_j$, $1\leq j\leq m$.
\end{lemma}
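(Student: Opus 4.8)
The plan is to apply the non-principal character $\chi$ directly to the third relation in (\ref{EDE}) and to exploit the standing orthogonality facts $\chi(G)=0$ and $\chi(e)=1$. Write $x_j=\chi(D_j)$. Since $\chi$ is multiplicative and the coefficients $a_{\ell,g}$ are real, we have $\chi(D_\ell^{-1})=\overline{\chi(D_\ell)}$ (using $\chi(g^{-1})=\overline{\chi(g)}$), so applying $\chi$ to $\sum_{\ell\neq j}D_jD_\ell^{-1}=\lambda G-\mu e$ gives
\[
x_j\sum_{\ell\neq j}\overline{x_\ell}=-\mu,\qquad 1\le j\le m.
\]
Setting $S=\chi(\mathcal{D})=\sum_j x_j$, the inner sum is $\overline{S}-\overline{x_j}$, so this reads $x_j\overline{S}-|x_j|^2=-\mu$, that is
\[
x_j\,\overline{S}=|x_j|^2-\mu .
\]

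First I would extract the collinearity of the $\chi(D_j)$. The right-hand side above is real, so $x_j\overline{S}$ is real for every $j$. As $S\neq 0$ by hypothesis, this forces each $x_j$ to be a real multiple of $S$, since $x_j=(x_j\overline{S})S/|S|^2$. Putting $\alpha_j=(x_j\overline{S})/|S|^2=(|x_j|^2-\mu)/|S|^2\in\mathbb{R}$ then yields $\chi(D_j)=\alpha_j\chi(\mathcal{D})$. To see $\alpha_j\neq 0$: if $\alpha_j=0$ then $x_j=0$, whence $x_j\overline{S}=0$, contradicting $x_j\overline{S}=|x_j|^2-\mu=-\mu<0$ because $\mu$ is a positive integer. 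This settles the existence of the nonzero real numbers $\alpha_1,\ldots,\alpha_m$.

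For the count of distinct values, I would substitute $x_j=\alpha_j S$ back into $x_j\overline{S}=|x_j|^2-\mu$. Using $x_j\overline{S}=\alpha_j|S|^2$ and $|x_j|^2=\alpha_j^2|S|^2$, everything collapses to the single scalar identity
\[
\alpha_j(\alpha_j-1)\,|S|^2=\mu ,
\]
whose right-hand side is independent of $j$. Hence every $\alpha_j$ is a root of the fixed quadratic $|S|^2 t^2-|S|^2 t-\mu=0$, which has at most two real roots; therefore the $\alpha_j$ take at most two distinct values, as claimed for $m>3$ (and in fact for every $m$).

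The computation is short, and the one step that carries the weight is recognizing that $x_j\overline{S}$ is forced to be real and equal to $|x_j|^2-\mu$: that single observation simultaneously yields the collinearity $\chi(D_j)=\alpha_j\chi(\mathcal{D})$, the nonvanishing of each $\alpha_j$, and the $j$-independent quadratic constraint behind the two-value dichotomy. The only points requiring care are the identity $\chi(D_\ell^{-1})=\overline{\chi(D_\ell)}$ and the use of the hypothesis $\chi(\mathcal{D})\neq 0$ to guarantee $S\neq 0$, which is precisely what makes division by $|S|^2$ legitimate.
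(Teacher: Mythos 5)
Your proof is correct, and its first half coincides in substance with the paper's: both apply $\chi$ to $\sum_{\ell\neq j}D_jD_\ell^{-1}=\lambda G-\mu e$ to obtain $\chi(D_j)\overline{\chi(\mathcal{D})}-|\chi(D_j)|^2=-\mu$, then use $\chi(\mathcal{D})\neq 0$ to exhibit $\chi(D_j)$ as a nonzero real multiple of $\chi(\mathcal{D})$; the only cosmetic difference is that you divide by $|S|^2$ (writing $S=\chi(\mathcal{D})$, $x_j=\chi(D_j)$ as you do), which is nonzero by hypothesis, while the paper divides by $|\chi(D_j)|^2-\mu$ and must first argue that this quantity is nonzero. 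The second half is genuinely different. The paper subtracts the equations for two indices $r\neq s$ to get $\alpha_r\bigl(\sum_{\ell\neq r,s}\alpha_\ell\bigr)=\alpha_s\bigl(\sum_{\ell\neq r,s}\alpha_\ell\bigr)$ (its equation (\ref{alpha1})) and then runs a combinatorial argument: if $\alpha_r\neq\alpha_s$ and $\alpha_r\neq\alpha_t$, two vanishing sums force $\alpha_s=\alpha_t$. You instead substitute $x_j=\alpha_j S$ back into the single equation for index $j$, so that every $\alpha_j$ is a root of the fixed quadratic $|S|^2t^2-|S|^2t-\mu=0$. Your route is shorter, removes the hypothesis $m>3$ entirely (at most two values for every $m$), and yields Vieta's relations for free: the two candidate values sum to $1$ and have product $-\mu/|S|^2<0$, hence have opposite signs whenever both occur --- facts the paper re-derives by hand in Lemma \ref{HighlyUnevern} via $a\alpha+b\beta=1$ and sign considerations. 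What the paper's route buys is equation (\ref{alpha1}) itself, which is not incidental: it is invoked again in the two subsequent lemmas and in both nonexistence theorems, precisely to conclude $\chi\bigl(\sum_{\ell\neq r,s}D_\ell\bigr)=0$ when two distinct values occur. If your proof replaced the paper's, you would want to record that identity as well; it does drop out of your framework in one line, since subtracting the $r$- and $s$-equations (after substituting $x_j=\alpha_j S$) gives $(\alpha_r-\alpha_s)\sum_{\ell\neq r,s}\alpha_\ell=0$.
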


\proof Since $\sum_{\ell\neq j}D_jD_{\ell}^{-1}=D_j\D^{-1}-D_jD_j^{-1}=\lambda G-\mu e$, we have
\begin{equation}
\label{chi(Dj)Equation} \sum_{\ell\neq j}\chi(D_j)\overline{\chi(D_{\ell})}=\chi(\lambda G-\mu e)=-\mu.
\end{equation}
 It follows that $\chi(D_j)\neq 0$ and
$\chi(D_j)\overline{\chi(\D)}-\chi(D_j)\overline{\chi(D_j)}=-\mu,$
i.e., $\overline{\chi(D_j)}\chi(\D)=\chi(D_j)\overline{\chi(D_j)}-\mu.$ Since $\chi(\D)\neq 0$ by assumption, we have $\chi(D_j)\overline{\chi(D_j)}-\mu\neq 0$. Multiplying both sides by $\chi(D_j)$, we obtain $\chi(D_j)=\frac{\chi(D_j)\overline{\chi(D_j)}}{\chi(D_j)\overline{\chi(D_j)}-\mu}\chi(\D)$, so,
$$\alpha_j=\frac{\chi(D_j)\overline{\chi(D_j)}}{\chi(D_j)\overline{\chi(D_j)}-\mu}$$
is nonzero. Further, $\alpha_j$ is clearly a real number.

For $m>3$, let $r,s$ be two distinct indices. By (\ref{chi(Dj)Equation}) we find
$$\alpha_r\chi(\D)\cdot \alpha_s\overline{\chi(\D)}+\sum\limits_{\ell\neq r,s}\alpha_r\chi(\D)\alpha_{\ell}\overline{\chi(\D)}=-\mu=\alpha_s\chi(\D)\cdot \alpha_r\overline{\chi(\D)}+\sum\limits_{\ell\neq r,s}\alpha_s\chi(\D)\alpha_{\ell}\overline{\chi(\D)}.$$
Recall we are assuming $\chi(\D)\neq 0$. So we obtain
\begin{equation}
\label{alpha1}
\alpha_r\left (\sum_{\ell\neq r,s}\alpha_{\ell}\right )=\alpha_s\left (\sum_{\ell\neq r,s}\alpha_{\ell}\right ).
\end{equation}
 If $\alpha_r\neq \alpha_s$, then $\sum\limits_{\ell\neq r,s}\alpha_{\ell}=0$ and $\alpha_t=-\sum\limits_{\ell\neq r,s,t}\alpha_{\ell}$. Likewise, if $\alpha_r\neq \alpha_t$ then $\alpha_s=-\sum\limits_{\ell\neq r,s,t}\alpha_{\ell}$, i.e., $\alpha_s=\alpha_t$. \qed

We assume that $m>3$, and $$\alpha_1,\ldots \alpha_m\in \{\alpha,\beta\}$$
with $\alpha_j=\alpha$ for exactly $a$ values of $j$ and  with $\alpha_j=\beta$ for exactly $b$ values of $j$. Without loss of generality, we assume $0\leq a\leq b$, and we known that $a+b=m$. The unordered pair $(a,b)$ will be called the spit of $(\alpha_1,\ldots,\alpha_m)$.

\begin{lemma}
\label{HighlyUnevern}
Let  $D_1,\ldots,D_m\in \mathbb{Z}[G]$ satisfy {\rm (\ref{EDE})}, $m>3$ and $\D=\sum_{1\leq j\leq m}D_j$. Suppose that a non-principal character $\chi$ of $G$ satisfies $\chi(\D)\neq 0$. Then the split of $(\alpha_1,\ldots,\alpha_m)$ cannot be $(0,m)$, $(1,m-1)$ or $(m/2,m/2)$.
\end{lemma}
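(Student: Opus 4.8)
The plan is to extract two scalar constraints on the split $(a,b)$ from the relations established in Lemma~\ref{chi(Dj)} and then verify that each of the three forbidden splits violates at least one of them. Throughout I write $\alpha,\beta$ for the two values assumed by the $\alpha_j$, with $\alpha$ occurring $a$ times and $\beta$ occurring $b=m-a$ times, and I set $t=|\chi(\D)|^2$, which is positive because $\chi(\D)\neq 0$. The first constraint is linear: applying $\chi$ to $\D=\sum_{j}D_j$ and using $\chi(D_j)=\alpha_j\chi(\D)$ gives $\chi(\D)=\bigl(\sum_j\alpha_j\bigr)\chi(\D)$, and dividing by $\chi(\D)\neq 0$ yields $\sum_j\alpha_j=1$, i.e.
\[
a\alpha+b\beta=1 .
\]

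The second constraint is a quadratic satisfied by every $\alpha_j$ individually. Substituting $\chi(D_j)=\alpha_j\chi(\D)$ into the identity $\chi(D_j)\overline{\chi(\D)}-\chi(D_j)\overline{\chi(D_j)}=-\mu$ used in the proof of Lemma~\ref{chi(Dj)} (this is (\ref{chi(Dj)Equation}) regrouped), and using that $\alpha_j$ is real, I would obtain
\[
\alpha_j(\alpha_j-1)\,t=\mu \qquad (1\le j\le m).
\]
Since $\mu>0$ and $t>0$, each $\alpha_j$ satisfies $\alpha_j(\alpha_j-1)>0$, so no $\alpha_j$ lies in $[0,1]$; moreover the two values $\alpha,\beta$ are precisely the two roots of $x^2-x-\mu/t=0$, whence $\alpha+\beta=1$ and $\alpha\beta=-\mu/t<0$ (in particular $\alpha\neq\beta$).

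With these two facts the three splits are dispatched by a one-line computation each. For $(0,m)$ every $\alpha_j$ equals the single value $\beta$; the linear constraint forces $\beta=1/m\in(0,1)$, contradicting $\beta\notin[0,1]$. For $(1,m-1)$ the relations $\alpha+\beta=1$ and $\alpha+(m-1)\beta=1$ subtract to $(m-2)\beta=0$; since $m>3$ this gives $\beta=0$, contradicting that the $\alpha_j$ are nonzero. For $(m/2,m/2)$ the linear constraint reads $\tfrac{m}{2}(\alpha+\beta)=1$, and $\alpha+\beta=1$ then forces $m=2$, contradicting $m>3$.

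The only genuinely substantive step is isolating the common quadratic relation $\alpha_j(\alpha_j-1)t=\mu$: it is what pins down $\alpha+\beta=1$ via Vieta's formulas and, at the same time, shows that no $\alpha_j$ can lie in $[0,1]$. Once this is in hand, the case analysis is routine. The one point requiring a little care is justifying $\alpha\neq\beta$ in the $(1,m-1)$ and $(m/2,m/2)$ cases, so that Vieta applies and the two genuinely distinct values are the two roots; this follows immediately from $\alpha\beta=-\mu/t<0$.
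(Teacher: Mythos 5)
Your proof is correct, and it reorganizes the argument rather than mirroring the paper's. Both proofs start from the same two ingredients: the linear relation $a\alpha+b\beta=1$, obtained by applying $\chi$ to $\D$, and the character identity (\ref{chi(Dj)Equation}). The paper converts (\ref{chi(Dj)Equation}) into the pairwise identity (\ref{alpha1}) and treats the three splits by three separate manipulations: a sign argument for $(0,m)$, a comparison of two indices giving $\alpha=\beta$ for $(1,m-1)$, and a comparison giving $\alpha+\beta=0$ for $(m/2,m/2)$. You instead convert (\ref{chi(Dj)Equation}) into a per-index quadratic $t\,\alpha_j(\alpha_j-1)=\mu$ with $t=|\chi(\D)|^2>0$; this is exactly the paper's equation (\ref{alpha}) rewritten using $\sum_{\ell\neq j}\alpha_\ell=1-\alpha_j$, so the underlying algebra is identical, but your packaging---every $\alpha_j$ is a root of one fixed quadratic, no root lies in $[0,1]$, and Vieta gives $\alpha+\beta=1$ for the two distinct roots---dispatches all three cases by a single uniform mechanism. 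What the paper's route buys in exchange is that (\ref{alpha1}) is needed again later (in the lemma on groups of prime order and in both nonexistence theorems for $pq$ and $p^2$), so deriving and using it here does double duty.

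One point needs a small repair: your closing justification that $\alpha\neq\beta$ ``follows immediately from $\alpha\beta=-\mu/t<0$'' is circular as stated, because the product formula $\alpha\beta=-\mu/t$ is Vieta applied to the two roots, which presupposes that $\alpha$ and $\beta$ are those two distinct roots. The clean statement is: the quadratic $x^2-x-\mu/t$ has two distinct real roots since its discriminant $1+4\mu/t$ is positive (equivalently, its roots have opposite signs); and if the two values coincide, the split degenerates to $(0,m)$, which you have already excluded. Hence in the $(1,m-1)$ and $(m/2,m/2)$ cases the values $\alpha,\beta$ are genuinely distinct, are therefore the two distinct roots, and Vieta applies. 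With that adjustment your argument is complete.
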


\proof Since $\chi(\D)=\sum_{1\leq j\leq m}\chi(D_j)=\sum_{1\leq j\leq m}\alpha_j\chi(\D)$, we have
\begin{equation}
\label{a alpha+ b beta=1}
 a\alpha+b \beta=1.
\end{equation}
By (\ref{chi(Dj)Equation}) we have
\begin{equation}
\label{alpha}
\alpha_j\left ( \sum\limits_{\ell\neq j}\alpha_{\ell}\right )=-\frac{\mu}{\chi(\D)\overline{\chi(\D)}}.
\end{equation}

If $a=0$ then $\beta=1/m$ by (\ref{a alpha+ b beta=1}). But this gives a positive value for the left-hand side of (\ref{alpha}) while the right-hand side is negative, a contradiction.

Next, if $a=1$ and $b=m-1$, we employ (\ref{alpha1}) to find $\alpha((m-2)\beta)=\beta((m-2)\beta)$. Since $\beta\neq 0$ by Lemma \ref{chi(Dj)}, this gives $\alpha=\beta$, which is the case we just discussed.

Finally, if $m$ is even and $a=b$, then  (\ref{alpha1})  with $\alpha_r=\alpha$ and $\alpha_s=\beta$, $\alpha \neq \beta$,
 gives $\alpha+\beta=0$. But this is impossible because, if $a=b=m/2$, we must have $\alpha+\beta=2/m$
  from (\ref{a alpha+ b beta=1}). \qed

\begin{lemma}
\label{NC2}
Let $p$ be a prime and let $G$ be a group of order $p$.  Let $D_1,\ldots,D_m\in \mathbb{Z}[G]$ satisfy {\rm (\ref{EDE})}. If $k>1$, $m>3$, $\lambda$ is divisible by  $\mu$ and $\sum_{1\leq j\leq m}D_j\neq \theta G$ for any positive integer $\theta$, then $(m-2)k\equiv 0\pmod p$.
\end{lemma}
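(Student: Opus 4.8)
The plan is to use the fact that $|G|=p$ is prime, so the non-principal characters form a single orbit under the Galois action of $(\mathbb{Z}/p\mathbb{Z})^{*}$ on $\mathbb{Q}(\zeta_p)$, and then to read off a congruence modulo $p$ by reducing character values modulo the ramified prime $\pi=1-\zeta_p$ lying over $p$. First I would set up the Galois reduction. Since $\D=\sum_j D_j\neq\theta G$, Lemma \ref{ExistenceNonPrincipal} gives a non-principal character with $\chi(\D)\neq 0$; because $\D$ has integer coefficients and the automorphism $\zeta_p\mapsto\zeta_p^{c}$ sends $\chi_i(\D)$ to $\chi_{ci}(\D)$, \emph{every} non-principal character satisfies $\chi(\D)\neq 0$. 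For each such $\chi$, Lemmas \ref{chi(Dj)} and \ref{HighlyUnevern} produce two distinct values $\alpha,\beta$ with split $(s,t)$, $2\le s<t$, $s+t=m$. The relations $\alpha+\beta=1$ (from (\ref{alpha}) with $\alpha\neq\beta$) and $s\alpha+t\beta=1$ (from (\ref{a alpha+ b beta=1})) have the unique rational solution $\alpha=\frac{t-1}{t-s}$, $\beta=\frac{1-s}{t-s}$. As these are rational, the Galois action fixes them, so the whole tuple $(\alpha_1,\dots,\alpha_m)$, the split $(s,t)$, and the index set $S=\{j:\alpha_j=\alpha\}$ with $|S|=s$ are the same for every non-principal character.

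Next I would extract the congruence. Working in $\mathbb{Z}[\zeta_p]$, where $\mathbb{Z}\cap(\pi)=(p)$ and $\zeta_p\equiv 1\pmod\pi$, we get $\chi(\D)\equiv mk$ and $\chi(D_j)\equiv k\pmod\pi$ for a fixed non-principal $\chi$. Clearing denominators in $\chi(D_j)=\alpha_j\chi(\D)$ gives, for $j\in S$, the identity $(t-s)\chi(D_j)=(t-1)\chi(\D)$ in $\mathbb{Z}[\zeta_p]$; reducing modulo $\pi$ turns this into the integer congruence $(t-s)k\equiv(t-1)mk\pmod p$, i.e. $t(m-2)k\equiv 0\pmod p$, and the value $\beta$ on the complement of $S$ yields likewise $s(m-2)k\equiv 0\pmod p$. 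Applying the principal character to (\ref{EDE}) produces the fundamental relation $(m-1)k^{2}=\lambda p-\mu$; together with $M_\alpha M_\beta=\mu^{2}$ and $M_\alpha\equiv M_\beta\equiv k^{2}\pmod p$ (Parseval applied to each $D_j$, where $M_\alpha=|\chi(D_j)|^2$ for $j\in S$ and $M_\beta$ for $j\notin S$) this independently confirms $m(m-2)k^{2}\equiv 0\pmod p$; here the hypothesis $\mu\mid\lambda$ is what forces $\mu\equiv k^{2}\pmod p$ in the critical case.

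Finally, if $p\mid k$ the claim is immediate, so assume $p\nmid k$ and suppose $(m-2)k\not\equiv 0\pmod p$. Then $p\nmid(m-2)$, and from $s(m-2)k\equiv t(m-2)k\equiv 0\pmod p$ we get $p\mid s$ and $p\mid t$, hence $p\mid m$ and $p\mid(t-s)$. I expect this to be the main obstacle, since the clean congruences above leave exactly this ``$p\mid\gcd(s,t)$'' gap open (the cross-check $m(m-2)k^2\equiv 0$ is vacuously satisfied here). I would close it by descending to the actual coefficients: from $\chi(D_j)=\alpha_j\chi(\D)$ for all non-principal $\chi$, the argument of Lemma \ref{ExistenceNonPrincipal} applied over $\mathbb{Q}[G]$ yields $a_{j,g}=\alpha_j a_g+\frac{k(1-\alpha_j m)}{p}$, where $a_g$ is the coefficient of $g$ in $\D$. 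Imposing $a_{j,g}\in\mathbb{Z}$ for some $g$ and some $j\in S$, and for the same $g$ and some $j\notin S$, gives the two congruences $(t-1)a_g\equiv k(m-2)(t/p)$ and $(1-s)a_g\equiv -k(m-2)(s/p)\pmod{t-s}$; adding them collapses the left side to $(t-s)a_g\equiv 0$ and forces $(t-s)\mid k(m-2)(t-s)/p$, that is $p\mid k(m-2)$, contradicting $p\nmid k$ and $p\nmid(m-2)$. (For $p=2$ the divisibility $2\mid m$ already forces $2\mid(m-2)$, so the claim is trivial.) Hence $(m-2)k\equiv 0\pmod p$.
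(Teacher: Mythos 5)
Your proof is correct, but it reaches the congruence by a genuinely different route from the paper's, and a much longer one. The paper needs only the single non-principal character $\chi$ with $\chi(\D)\neq 0$ supplied by Lemma \ref{ExistenceNonPrincipal}: choosing $r,s$ with $\alpha_r\neq\alpha_s$, equation (\ref{alpha1}) forces $\chi\left(\sum_{\ell\neq r,s}D_{\ell}\right)=0$, and since the minimal polynomial of a primitive $p$th root of unity is $x^{p-1}+\cdots+x+1$, an integer-coefficient sum of $p$th roots of unity can vanish only if all coefficients are equal; hence $\sum_{\ell\neq r,s}D_{\ell}=\theta' G$, and applying the principal character gives $(m-2)k=\theta' p$ at once. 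You instead upgrade one character to all of them via the Galois action, solve for the explicitly rational values $\alpha=(t-1)/(t-s)$, $\beta=(1-s)/(t-s)$ (hence character-independent), and then apply Fourier inversion over $\mathbb{Q}$ to obtain the exact coefficient identity $a_{j,g}=\alpha_j a_g+k(1-\alpha_j m)/p$. Both arguments rest on the same arithmetic fact about prime order (your ``single Galois orbit'' of non-principal characters is the paper's ``$\Phi_p$ has degree $p-1$'' in different clothing), but your route extracts strictly more structure: each $D_j$ is an explicit rational combination of $\D$ and $G$. The price is length, and you pay more than necessary: the mod-$(1-\zeta_p)$ congruences and the resulting case split $p\mid\gcd(s,t)$ --- a gap you correctly identify and correctly close --- are entirely redundant once you have the descent identity, because adding that identity for one $j\in S$ and one $j'\notin S$ gives $(t-s)(a_{j,g}+a_{j',g})=(t-s)a_g-k(m-2)(t-s)/p$, i.e.\ $k(m-2)/p=a_g-a_{j,g}-a_{j',g}\in\mathbb{Z}$, which is the lemma in one line, with no case analysis at all. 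One small caveat: your Parseval ``cross-check'' paragraph is muddled, and its suggestion that the hypothesis $\mu\mid\lambda$ is what makes the argument work is unsupported --- neither your main chain of reasoning nor the paper's proof ever uses that hypothesis; since you flag that paragraph as non-load-bearing, this does not affect correctness.
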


\proof  Since $p$ is a prime, $G$ is a cyclic group. Let $G$ be generated by $z$. 
Denote $\D=\sum_{1\leq j\leq m}D_j$.  By assumption and Lemma \ref{ExistenceNonPrincipal}, there is a non-principal character $\chi$ of $G$ satisfying $\chi(\D)\neq 0$.  
Let $\xi=\chi(z)$. Then $\xi$ is a primitive $p${th} root of unity.
By  Lemma \ref{chi(Dj)}, we have two distinct real numbers $\alpha, \beta$ such that $\alpha_j\in \{\alpha, \beta\}$ for $1\leq j\leq m$, with each value occurring at least twice. Choose $r$ and $s$ with $\alpha_r=\alpha$ and $\alpha_s=\beta$.  Then Equation (\ref{alpha1}) forces
\begin{equation}
\label{D}
\chi\left (\sum\limits_{\ell\neq r,s}D_{\ell}\right )=0.
\end{equation}
Clearly, the minimal polynomial of $\xi$ over the rational filed $Q$ is the $p$th  cyclotomic polynomial $\Phi(x)=x^{p-1}+x^{p-2}+\cdots +x+1$.
Since the left-hand side of (\ref{D}) is a polynomial of $\xi$, the equation (\ref{D}) forces
$$\sum\limits_{\ell\neq r,s}D_{\ell}=\theta' G$$
for some integer $\theta'$.  Applying the principal character $\chi_0$ of $G$ on the above equation yields $(m-2)k=\theta' p$. \qed

\begin{theorem}
\label{NC}
If there is an $(n,m,k,\lambda)$-SEDF over an abelian group $G$ with $k>1$ and $m>4$, then the following hold:

\noindent $(1)$ $n\not\equiv 0 \pmod k$;

\noindent $(2)$ for any prime divisor $p$ of $n$, $m\equiv 2\pmod p$ whenever gcd$(mk,p)=1$.
\end{theorem}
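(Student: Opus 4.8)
The plan is to dispatch the two parts independently, each by reducing to a result already established above.

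Part~(1) requires no new work: it is exactly the content of Lemma~\ref{NC1}, which shows that no $(n,m,k,\lambda)$-SEDF exists when $k>1$ and $k\mid n$. Hence for any SEDF with $k>1$ (in particular with $m>4$) we must have $n\not\equiv 0\pmod k$.

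For Part~(2), fix a prime divisor $p$ of $n$ with $\gcd(mk,p)=1$. The strategy is to reduce modulo a subgroup of index $p$ so that the prime-order lemma (Lemma~\ref{NC2}, stated for groups of order $p$) applies. Since $G$ is a finite abelian group with $p\mid|G|$, the structure theorem for finite abelian groups yields a subgroup $H\le G$ with $|G/H|=p$; let $\sigma\colon G\to G/H$ be the natural homomorphism and put $D_j=\sigma(A_j)\in\mathbb Z[G/H]$. Pushing the SEDF equation through $\sigma$, as recorded in Section~2, gives $\sum_{\ell\neq j}D_jD_\ell^{-1}=\lambda|H|(G/H)-\lambda e$, so the $D_1,\dots,D_m$ satisfy system~(\ref{EDE}) over $G/H$ with $G$ replaced by $G/H$, $\lambda$ replaced by $\lambda'=\lambda|H|$, and $\mu$ replaced by $\mu'=\lambda$. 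Each $D_j$ has nonnegative coefficients summing to $k$, and $\lambda'=|H|\mu'\ge\mu'$, so $\mu'$ divides $\lambda'$ and every hypothesis of~(\ref{EDE}) is met.

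The key step is to verify the nondegeneracy assumption of Lemma~\ref{NC2}, namely that $\D:=\sum_{1\le j\le m}D_j$ is not of the form $\theta(G/H)$ for any positive integer $\theta$. For this I would apply the principal character $\chi_0$ of $G/H$: since $\chi_0(\D)=mk$ while $\chi_0(\theta(G/H))=\theta p$, an equality $\D=\theta(G/H)$ would force $p\mid mk$, contradicting $\gcd(mk,p)=1$. With $k>1$, $m>4>3$, $\mu'\mid\lambda'$, and this nondegeneracy in hand, Lemma~\ref{NC2} yields $(m-2)k\equiv 0\pmod p$. Finally, $\gcd(mk,p)=1$ forces $p\nmid k$, whence $p\mid(m-2)$, that is, $m\equiv 2\pmod p$, completing Part~(2).

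I expect the only points needing care to be the existence of the index-$p$ quotient and the bookkeeping that the images $D_j$ genuinely satisfy~(\ref{EDE}) with $\mu'\mid\lambda'$; once these are confirmed, the conclusion is a direct substitution into Lemmas~\ref{NC1} and~\ref{NC2}, so I anticipate no substantial obstacle beyond this routine verification.
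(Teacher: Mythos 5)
Your proposal is correct and takes essentially the same route as the paper's own proof: part (1) is quoted directly from the lemma ruling out $k\mid n$, and part (2) passes to a quotient $G/H$ of prime order $p$, notes that the images $D_j=\sigma(A_j)$ satisfy the system with $\lambda'=\lambda|H|$ and $\mu'=\lambda$ (so $\mu'\mid\lambda'$), and invokes the prime-order lemma to get $(m-2)k\equiv 0\pmod p$, whence $m\equiv 2\pmod p$ since $\gcd(k,p)=1$. Your principal-character verification that $\sum_{j}D_j\neq\theta(G/H)$ when $\gcd(mk,p)=1$ simply makes explicit a step the paper asserts without detail.
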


\proof The first claim holds by Lemma \ref{NC1}.

Suppose that $\{A_1,\ldots, A_m\}$ is an $(n,m,k,\lambda)$-SEDF over $G$. It is well known that there is a subgroup $H$ of order $\frac{n}{p}$. Let $\sigma$ be a natural homomorphisms from $G$ to the quotient group $G/H$.
Define $D_j=\sigma(A_j)$ for $1\leq j\leq m$. Then $D_j\in \mathbb{Z}[G/H]$ and $\sum\limits_{\ell\neq j}D_jD_{\ell}^{-1}=\lambda |H| (G/H)-\lambda (H)$, i.e., $\{D_1,\ldots,D_m\}$ satisfies (\ref{EDE}). Note that $G/H$ is an abelian group of order $p$. When gcd$(mk,p)=1$, it holds that $\sum_{1\leq j\leq m}D_j\neq \theta (G/H)$ for any positive integer $\theta$. By Lemma \ref{NC2} we obtain the second claim. \qed

\noindent{\bf Remark:}  When $n= p$ is a prime, we must have
$\gcd (mk, p) =1$. So, Theorem \ref{t.stinson} is obtained again by applying Theorem \ref{NC}.

\begin{corollary}
\label{p1ps}
If $n = p_1p_2 ... p_s$ where $p_1,\ldots,p_s$ are distinct  primes, then there is no $(n,m,k,\lambda)$-SEDFs for $m > 4$ and
$k > 1$ when gcd $(mk, n) = 1$.
\end{corollary}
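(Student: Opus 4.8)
The plan is to deduce Corollary~\ref{p1ps} directly from Theorem~\ref{NC}(2) by exploiting the hypothesis that $n$ is squarefree. Since $n=p_1p_2\cdots p_s$ with the $p_i$ distinct primes, every prime divisor of $n$ is one of the $p_i$, and $n$ has no repeated prime factors. Suppose, for contradiction, that an $(n,m,k,\lambda)$-SEDF exists with $m>4$, $k>1$, and $\gcd(mk,n)=1$. The strategy is to apply Theorem~\ref{NC}(2) simultaneously to \emph{every} prime divisor $p_i$ of $n$ and then combine the resulting congruences on $m$ via the Chinese Remainder Theorem.

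First I would check that the hypothesis of Theorem~\ref{NC}(2) is met for each $p_i$. The condition there is $\gcd(mk,p)=1$ for the prime divisor $p$ in question. Since $\gcd(mk,n)=1$ and each $p_i\mid n$, we immediately get $\gcd(mk,p_i)=1$ for every $i$. Hence Theorem~\ref{NC}(2) applies to each $p_i$ and yields
\begin{equation}
\label{cong}
m\equiv 2\pmod{p_i}\quad\text{for every }1\le i\le s.
\end{equation}
Equivalently, $p_i\mid (m-2)$ for each $i$.

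Next I would combine these. Because the $p_i$ are pairwise distinct primes, they are pairwise coprime, so the simultaneous divisibilities $p_i\mid(m-2)$ give $n=p_1p_2\cdots p_s\mid(m-2)$; that is, $m\equiv 2\pmod n$. The final step is to derive a contradiction from $m\equiv 2\pmod n$ together with the basic SEDF constraints. Recall from the introduction that an SEDF satisfies $mk\le n$, and here $m>4$ and $k>1$ force $m\ge 5$, so $2<m\le n/k < n$. Thus $m-2$ is a positive integer strictly less than $n$, which cannot be divisible by $n$. This contradiction shows no such SEDF exists, completing the proof.

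The main obstacle is essentially bookkeeping rather than any deep difficulty: one must verify that the coprimality hypothesis $\gcd(mk,p_i)=1$ of Theorem~\ref{NC}(2) is genuinely available for \emph{all} prime divisors at once (which follows at once from $\gcd(mk,n)=1$ and squarefreeness), and then that the assembled congruence $m\equiv 2\pmod n$ is incompatible with the size bound $m\le n/k<n$ forced by $k>1$. The squarefree hypothesis on $n$ is exactly what lets the local congruences \eqref{cong} patch together globally, so the entire argument hinges on that structural assumption.
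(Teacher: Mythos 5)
Your proof is correct and follows essentially the same route as the paper: apply Theorem~\ref{NC}(2) to each prime divisor $p_i$ (using $\gcd(mk,n)=1$), combine the congruences $m\equiv 2\pmod{p_i}$ into $m\equiv 2\pmod n$ via the distinctness of the primes, and derive a contradiction. The only difference is that you explicitly justify why $m\equiv 2\pmod n$ is impossible (via $2<m\le n/k<n$), a step the paper leaves implicit.
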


\proof Since gcd$(mk,n)=1$, by Theorem \ref{NC} we would have $m\equiv 2\pmod{p_i}$ for $1\leq i\leq s$ if there were an $(n,m,k,\lambda)$-SEDF. Since $p_1,\ldots,p_s$ are distinct  primes, we have $m\equiv 2\pmod n$, a contradiction. \qed

\begin{theorem}
Let $p$ be a prime, $k>1$ and $m>2$. Then there does not exist a $(p^2,m,k,\lambda)$-SEDF over a cyclic group of order $p^2$.
\end{theorem}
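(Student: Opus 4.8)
The plan is to split the argument according to the cases $m=3,4$ and $m>4$, so that we can invoke the theorems already available for small $m$ and handle the remaining cases with the cyclotomic machinery of Lemma~\ref{NC2}. The cases $m=3$ and $m=4$ are disposed of immediately: by Theorem~\ref{m34}, no $(n,m,k,\lambda)$-SEDF with $k>1$ exists when $m\in\{3,4\}$, regardless of the group. So the real content is the case $m>4$, where we apply Theorem~\ref{NC} to the group $G$ of order $n=p^2$.

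For $m>4$, suppose for contradiction that a $(p^2,m,k,\lambda)$-SEDF exists over the cyclic group $G$ of order $p^2$. First I would rule out $p\mid k$: since $n=p^2$, if $k$ were divisible by $p$ then $n\equiv 0\pmod k$ would be at least partially forced, and more directly part~(1) of Theorem~\ref{NC} together with the divisibility relation $(m-1)k^2=\lambda(n-1)=\lambda(p^2-1)$ constrains the arithmetic. The clean route is to observe that $\gcd(k,p^2-1)$ governs how $p$ divides $k$: from $(m-1)k^2=\lambda(p^2-1)$, any prime $p$ dividing $k$ forces $p\mid\lambda(p^2-1)$, hence $p\mid\lambda$, and one tracks the resulting power-of-$p$ bookkeeping to derive a contradiction with $mk\le n=p^2$. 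Once $\gcd(k,p)=1$ is established, I would examine whether $\gcd(mk,p)=1$, i.e.\ whether $p\mid m$.

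The two sub-cases are then $p\nmid m$ and $p\mid m$. If $\gcd(mk,p)=1$, then part~(2) of Theorem~\ref{NC} applies with the prime divisor $p$ of $n$, giving $m\equiv 2\pmod p$; combined with $m>4$ and $mk\le p^2$ this should be pushed to a contradiction via the counting inequality $(m-1)k^2=\lambda(p^2-1)$ and the bound $\lambda<k$ from the Huczynska--Paterson lemma. If instead $p\mid m$, then $\gcd(mk,p)=p>1$, so Theorem~\ref{NC}(2) gives no information directly, and here one must work inside the quotient $G/H$ where $H$ is the subgroup of order $p$; the images $D_j=\sigma(A_j)$ satisfy (\ref{EDE}) over the group $G/H$ of order $p$, and I would re-examine whether $\sum_j D_j=\theta(G/H)$ can be avoided so as to invoke Lemma~\ref{NC2} again at this inner level.

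The main obstacle I anticipate is precisely the case $p\mid m$, because there Theorem~\ref{NC}(2) is silent and the straightforward quotient-to-order-$p$ reduction can collapse (the hypothesis $\sum_j D_j\neq\theta(G/H)$ may fail). Overcoming this will likely require exploiting that $G$ is \emph{cyclic} of order $p^2$ rather than, say, elementary abelian: cyclicity means $G$ has a \emph{unique} subgroup of each order, so there is essentially one choice of $H$ and one well-understood quotient, and the cyclotomic argument of Lemma~\ref{NC2} can be layered—first passing to $G/H$ of order $p$, analyzing the split $(a,b)$ of the $\alpha_j$ via Lemma~\ref{HighlyUnevern}, and then extracting a divisibility condition on $k$ modulo $p$ that contradicts $\gcd(k,p)=1$. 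I expect the endgame to combine the constraint $(m-2)k\equiv 0\pmod p$ from Lemma~\ref{NC2} with $\gcd(k,p)=1$ to force $p\mid(m-2)$, which together with $p\mid m$ yields $p\mid 2$, isolating $p=2$ as the only survivor and then dispatching it by the explicit small-order arithmetic of $(m-1)k^2=\lambda(p^2-1)$.
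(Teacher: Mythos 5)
Your reduction to $m>4$ via Theorem \ref{m34} and your exclusion of $p\mid k$ (from $(m-1)k^2=\lambda(p^2-1)$, which forces $p^2\mid\lambda$ and then a size contradiction) both match the paper. The genuine gap is in your sub-case $\gcd(mk,p)=1$. There, Theorem \ref{NC}(2) gives only the congruence $m\equiv 2\pmod p$, and for $n=p^2$ this is \emph{not} a contradiction: unlike the prime-order case, $mk\le p^2$ allows $m=p+2,\,2p+2,\dots$, and your hope that the counting identity together with $\lambda<k$ (or even the Huczynska--Paterson inequality) finishes the job is false. Concretely, take $p=17$ and $(n,m,k,\lambda)=(289,19,12,9)$: then $(m-1)k^2=18\cdot 144=2592=9\cdot 288=\lambda(n-1)$, $\lambda<k$, $mk=228\le 289$, $\gcd(mk,17)=1$, $m\equiv 2\pmod{17}$, and $\lambda(k-1)(m-2)=1683\le 1728=(\lambda-1)k(m-1)$, so every arithmetic test you propose is passed. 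No bookkeeping of this kind can rule out such parameters; a structural argument about the sets themselves is required.

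That structural argument is the heart of the paper's proof and is absent from your proposal: the paper applies a \emph{faithful} character $\chi$ of the cyclic group of order $p^2$ (not merely a character of the order-$p$ quotient). If $\chi(\sum_j A_j)\neq 0$, Lemmas \ref{chi(Dj)} and \ref{HighlyUnevern} together with Equation (\ref{alpha1}) force $\chi\left(\sum_{\ell\neq 1,m}A_\ell\right)=0$ and $\chi\left(\sum_{\ell\neq 1,m-1}A_\ell\right)=0$; the Lam--Leung theorem \cite{LL2000} on vanishing sums of $p^2$-th roots of unity then shows each of these is a disjoint union of cosets of $\langle z^p\rangle$, and a disjointness argument (using $A_{m-1}\cap A_m=\emptyset$) upgrades this to: \emph{every} $A_j$ is a union of cosets of $\langle z^p\rangle$. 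This is absurd, since then no external difference $xy^{-1}$ with $x\in A_j$, $y\in A_\ell$, $j\neq\ell$, lies in $\langle z^p\rangle$, whereas an SEDF must cover $\lambda(\langle z^p\rangle\setminus\{e\})$. The complementary case $\chi(\sum_j A_j)=0$ is handled by the same Lam--Leung structure theorem plus Lemma \ref{NC2} in the quotient, and this is also exactly where your troublesome sub-case $p\mid m$ gets resolved: the hypothesis $\sum_j D_j\neq\theta(G/H)$ holds there because $\sum_j A_j$ is a $0/1$ union of $H$-cosets which (by Lemma \ref{NC1}) cannot be all of $G$. Without the appeal to \cite{LL2000} and the coset-completion argument, neither of your two sub-cases closes.
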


\begin{proof}
By Theorem \ref{m34}, need only consider $m\geq 5$. We first show that there does not exist a $(p^2,m,k,\lambda)$-SEDF over a cyclic group of order $p^2$ if $k\equiv 0\pmod p$. 
If $k=xp$ then by the fact $(m-1)k^2=\lambda(p^2-1)$ we have that $\lambda$ is divisible by $p^2$, thereby $mx<p$ and $(m-1)x^2\geq p^2-1$. It is impossible because $(m-1)x^2<(m-1)\frac{p^2}{m^2}<p^2-1$. Now we assume $k\not\equiv 0\pmod p$. 

 Let $G$ be a cyclic group of order $p^2$ generated by $z$. 
Suppose, on the contrary, that there is a $(p^2,m,k,\lambda)$-SEDF
$\{A_1,\ldots,A_m\}$ over $G$.
Let $\chi$ be the character of $G$ defined by $\chi(z^k)=exp^{\frac{2k\pi i}{p^2}}$ for each integer $k$ where $exp^{\frac{2\pi i}{p^2}}$ is a $p^2$th primitive root of unity.

If $\chi(\sum_{1\leq j\leq m}A_j)=0$, then by \cite[Theorem 3.3]{LL2000} $\sum_{1\leq j\leq m}A_j$ can be represented as $(1+z^{p}+z^{2p}+\cdots+z^{p^2-p})g(z)$ for some $g(z)\in \mathbb{Z}[G]$ with all coefficients of $g(z)$ being non-negative. Since the coefficients of   $\sum_{1\leq j\leq m}A_j$ are 0's or 1's and the degree of $\sum_{1\leq j\leq m}A_j$ is less than $p^2$, we have the degree of $g(z)$ is less than $p$ and all coefficients of $g(z)$ are 0's or $1$'s. Therefore, $\sum_{1\leq j\leq m}A_j$ is expressible as a disjoint union of cosets of $\langle z^p\rangle$. When $\sum_{1\leq j\leq m}A_j=G$, we have $p^2\equiv 0\pmod k$, which is impossible by Theorem \ref{NC}.  When $\sum_{1\leq j\leq m}A_j\neq G$, the set of multisets $\{\sigma(A_1),\ldots, \sigma(A_m)\}$ satisfies Equations (\ref{EDE}) and $\sigma(\sum_{1\leq j\leq m}A_j)\neq \theta G/\langle z^p\rangle$ for any integer $\theta$,
where $\sigma$ is a natural homomorphisms from $G$ to the quotient group $G/\langle z^p\rangle$. By Lemma \ref{NC2}, we have $(m-2)k\equiv 0\pmod p$.  Since $mk\equiv 0\pmod p$, we have $m-2\equiv m\equiv 0\pmod p$ or $k\equiv 0\pmod p$. Then $p=2$ or $k\equiv 0\pmod p$, a contradiction.

If $\chi(\sum_{1\leq j\leq m}A_j)\neq 0$, then By Lemma \ref{HighlyUnevern} we  have two distinct real numbers $\alpha,\beta$ such that $\alpha_j\in \{\alpha,\beta\}$ for $1\leq j\leq m$, with each value occurring at least twice. Without loss of generality, let $\alpha_1=\cdots =\alpha_a=\alpha$ and $\alpha_{a+1}=\cdots =\alpha_m=\beta$.  Then Equation (\ref{alpha1}) forces
$$\chi \left (\sum\limits_{\ell\neq 1,m}A_{\ell}\right )=0.$$
Similar discussion as above shows that
$\sum\limits_{\ell\neq 1,m}A_{\ell}$ is expressible as a disjoint union of cosets of $\langle z^p\rangle$ of $G$.
Similarly, $\sum\limits_{\ell\neq 1,m-1}A_{\ell}$ is also expressible as a disjoint union of cosets of $\langle z^p\rangle$ of $G$. If $A_m$ were not a disjoint union of cosets of the $\langle z^p\rangle$ of $G$, then there would be a nonempty  subset $T \subset A_m$ such that $T$ is a proper subset of a coset $z^t\langle z^p\rangle$  and $z^t\langle z^p\rangle\setminus T$ is a subset of  $\sum\limits_{\ell\neq 1,m-1,m}A_{\ell}$. Since $A_{m-1}\cap A_m=\emptyset$, $\sum\limits_{\ell\neq 1,m}A_{\ell}$ does not contain $z^t\langle z^p\rangle$, i.e., not a
disjoint union of cosets of $\langle z^p\rangle$ of $G$,  a contradiction. Therefore, each $A_j$ is a disjoint union of cosets of the subgroup $\langle z^p\rangle$ of $G$, while this leads to $\sum_{\ell \neq j}A_jA_{\ell}^{-1}$ having  no elements of $\langle z^p\rangle$, a contradiction.  \qed
\end{proof}

For $v=pq$ where $p,q$ are distinct primes, Martin and Stinson analyzed the properties of a $(pq,m,k,\lambda)$-SEDF and failed to rule out its existence. In order to give  its nonexistence, we need the following result which, as pointed out in \cite{MS-arXiv}, can be obtained from \cite{LL2000}.

\begin{theorem}
[\cite{LL2000}]
\label{pq}
Let $p,q$ be distinct primes, $G$ a cyclic group of order $pq$ and $\varepsilon\subset G$. If there is a non-principal character $\chi$ of $G$ satisfying  $\chi(\varepsilon)=0$, then there is a nonnegative integer $k$ such that $|\varepsilon|=kp$ or $|\varepsilon|=kq$. Further, $\varepsilon$ is expressible as a disjoint union of cosets of some proper subgroup $H$ of $G$ (where $|H|=p$ or $|H|=q$).
\end{theorem}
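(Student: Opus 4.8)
The plan is to translate the hypothesis $\chi(\varepsilon)=0$ into a vanishing sum of roots of unity and then read off the structure of $\varepsilon$ from the arithmetic of cyclotomic fields. Fix a generator $z$ of $G$ and write $\varepsilon=\sum_i a_i z^i$ with $a_i\in\{0,1\}$, so that $\chi(\varepsilon)=\sum_i a_i\zeta^i$, where $\zeta=\chi(z)$ is a primitive $d$th root of unity and $d=\mathrm{ord}(\chi)$ divides $pq$ with $d\neq 1$. Thus $d\in\{p,q,pq\}$, and I would split the argument according to whether $\chi$ is primitive ($d=pq$) or imprimitive ($d=p$ or $d=q$). The primitive case carries all of the real content, while the imprimitive cases reduce to it through a quotient of $G$.

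For the primitive case, let $P$ and $Q$ be the unique subgroups of orders $p$ and $q$, so that $G=P\times Q$ and $\chi$ carries the element with $P$-coordinate $a$ and $Q$-coordinate $b$ to $\omega_p^{a}\omega_q^{b}$, where $\omega_p=\zeta^{q}$ and $\omega_q=\zeta^{p}$ are primitive $p$th and $q$th roots of unity. Grouping $\varepsilon$ by its $P$-coordinate, I would write $\chi(\varepsilon)=\sum_{a=0}^{p-1}\omega_p^{a}f_a(\omega_q)$, where each $f_a(x)\in\mathbb{Z}[x]$ has degree $<q$ and coefficients in $\{0,1\}$. Since $\{1,\omega_p,\dots,\omega_p^{p-2}\}$ is a basis of $\mathbb{Q}(\zeta)$ over $\mathbb{Q}(\omega_q)$ (the minimal polynomial of $\omega_p$ over $\mathbb{Q}(\omega_q)$ being $\Phi_p(x)=1+x+\cdots+x^{p-1}$, which has the correct degree $p-1$), substituting $\omega_p^{p-1}=-(1+\cdots+\omega_p^{p-2})$ and equating coefficients in $\chi(\varepsilon)=0$ forces $f_0(\omega_q)=\cdots=f_{p-1}(\omega_q)$. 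The second step uses that $\omega_q$ has minimal polynomial $\Phi_q(x)=1+x+\cdots+x^{q-1}$ of degree $q-1$: for any two indices the difference $f_a-f_{a'}$ then has degree $<q$, vanishes at $\omega_q$, and has coefficients in $\{-1,0,1\}$, so it must be an integer multiple $c\,\Phi_q$ with $c\in\{-1,0,1\}$. Feeding the constraint $a_i\in\{0,1\}$ back in, I would show that this leaves only two possibilities: either the $\{0,1\}$-pattern $f_a$ is independent of $a$, in which case $\varepsilon$ is a disjoint union of cosets of $P$, or the patterns are complementary and $\varepsilon$ is a disjoint union of cosets of $Q$. In either case $|\varepsilon|$ is a multiple of $p$ or of $q$.

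For an imprimitive character, say $d=q$, the character $\chi$ is trivial on $P$ and factors through a faithful character $\overline{\chi}$ of $G/P\cong\mathbb{Z}_q$. Pushing $\varepsilon$ forward gives $\overline{\varepsilon}=\sum_{c}n_c\,c\in\mathbb{Z}[G/P]$ with $n_c\in\{0,\dots,p\}$ and $\overline{\chi}(\overline{\varepsilon})=\sum_c n_c\omega_q^{c}=0$; since this is a polynomial in $\omega_q$ of degree $<q$ and $\Phi_q$ is its minimal polynomial, all the $n_c$ are equal, whence $|\varepsilon|=\sum_c n_c$ is a multiple of $q$. The case $d=p$ is symmetric, using $G/Q$ and $\Phi_p$ and giving a multiple of $p$.

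The step I expect to be the main obstacle is the combinatorial translation at the end of the primitive case: converting the algebraic conclusion that all the column polynomials $f_a$ represent the same element of $\mathbb{Q}(\omega_q)$ into the rigid statement that $\varepsilon$ is an honest disjoint union of cosets. This is exactly where the $\{0,1\}$ hypothesis on the coefficients is indispensable and where one must rule out mixed configurations; it is precisely the content of the Lam--Leung classification of vanishing sums of roots of unity for modulus $pq$ in \cite{LL2000}, which one could alternatively invoke directly to shortcut the two linear-independence arguments above.
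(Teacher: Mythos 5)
The paper never actually proves this statement: it is imported wholesale from Lam--Leung \cite{LL2000} (via \cite{MS-arXiv}), so your attempt can only be measured against the known result. Your primitive case $d=pq$ is correct and essentially complete: the observation that $\Phi_p$ is the minimal polynomial of $\omega_p$ over $\mathbb{Q}(\omega_q)$, the resulting conclusion $f_0(\omega_q)=\cdots=f_{p-1}(\omega_q)$, the identification of each difference $f_a-f_{a'}$ with $c\,\Phi_q$, $c\in\{-1,0,1\}$, and the final dichotomy (all columns identical, giving a union of cosets of $P$; or some pair equal to $\{0,\Phi_q\}$, which with $\{0,1\}$ coefficients forces every column to be full or empty, giving a union of cosets of $Q$) together constitute a clean self-contained reproof of the Lam--Leung classification for modulus $pq$ in the set (as opposed to multiset) case. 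This is exactly the content the paper borrows, and the route is legitimate.

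The genuine gap is your imprimitive case, and it cannot be repaired. When $\mathrm{ord}(\chi)=q$ you prove only the divisibility $|\varepsilon|=kq$ and silently drop the ``Further'' clause; but for non-faithful characters that clause is simply false. Concretely, take $G=\mathbb{Z}_{15}$ (written additively), $\chi(x)=\omega_5^{\,x}$ with $\omega_5=e^{2\pi i/5}$ (non-principal, of order $5$), and $\varepsilon=\{0,1,2,3,4\}$. Then $\chi(\varepsilon)=1+\omega_5+\omega_5^2+\omega_5^3+\omega_5^4=0$, yet $\varepsilon$ is not a disjoint union of cosets of the subgroup $\{0,5,10\}$ of order $3$ (as $3\nmid 5$) nor of the subgroup $\{0,3,6,9,12\}$ of order $5$ (the only candidates being $\{0,3,6,9,12\}$, $\{1,4,7,10,13\}$, $\{2,5,8,11,14\}$). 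So the statement as printed in the paper is itself too strong: the structural conclusion requires $\chi$ to be faithful, i.e.\ of order $pq$, which is the hypothesis under which \cite{LL2000} proves it (and under which your primitive case is the entire proof; the mixed case of cosets of both $P$ and $Q$ is excluded because a coset of $P$ and a coset of $Q$ always meet, so $\{0,1\}$ coefficients force a single subgroup). Your write-up should say this explicitly rather than letting the imprimitive case appear settled: either add the faithfulness hypothesis to the theorem, or weaken the conclusion for characters of order $p$ or $q$ to the divisibility statement alone. The distinction is not cosmetic for this paper, since in the proof of its Theorem 3.9 the character produced by Lemma \ref{ExistenceNonPrincipal} is not known to be faithful, and the structural clause is precisely what gets used there.
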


\begin{theorem}
Let $p,q$  be distinct primes, $k>1$ and $m>2$. Then there  does not exist a $(pq,m,k,\lambda)$-SEDF.
\end{theorem}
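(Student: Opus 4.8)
The plan is to follow the template of the preceding $(p^2,m,k,\lambda)$ argument, replacing the $p^2$-structure input by the Lam--Leung-type Theorem \ref{pq} for cyclic groups of order $pq$. Since an SEDF lives in an abelian group and the only abelian group of order $pq$ is cyclic, I write $G=\langle z\rangle$ and let $G_p,G_q$ denote its unique subgroups of orders $p$ and $q$. By Theorem \ref{m34} I may assume $m\ge 5$, and by Lemma \ref{NC1} I may assume $k\notin\{p,q,pq\}$, so $k\nmid pq$. Two numerical remarks will be used throughout: since the $A_j$ are pairwise disjoint $k$-subsets of $G$ we have $mk\le pq$, hence $0<(m-2)k<pq$; and $\D:=\sum_{j}A_j$ satisfies $\D\ne tG$ for every $t$, since $\D=tG$ would give $mk=pq$ and thus $k\mid pq$. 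I fix a faithful character $\chi$ of $G$ and distinguish two cases according to the value of $\chi(\D)$.

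If $\chi(\D)\ne 0$, Lemmas \ref{chi(Dj)} and \ref{HighlyUnevern} apply and give distinct reals $\alpha,\beta$ with each $\alpha_j\in\{\alpha,\beta\}$ and a split $(a,b)$ with $2\le a\le b$. For any pair $r,s$ with $\alpha_r\ne\alpha_s$, identity (\ref{alpha1}) forces $\chi\big(S_{r,s}\big)=0$, where $S_{r,s}:=\sum_{\ell\ne r,s}A_\ell$ is a genuine subset of $G$ of size $(m-2)k$, so $0<|S_{r,s}|<pq$. By Theorem \ref{pq} each $S_{r,s}$ is a disjoint union of cosets of $G_p$ or of $G_q$; this already forces $p\mid(m-2)k$ or $q\mid(m-2)k$, and since $(m-2)k<pq$ both cannot hold. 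Thus a single prime, say $p$, divides $(m-2)k$, and --- using that a cyclic group has a unique subgroup of order $p$ --- every $S_{r,s}$ is a disjoint union of cosets of $G_p$. Because $a,b\ge 2$, the differences $S_{r,s}-S_{r',s}=A_{r'}-A_r$ and $S_{r,s}-S_{r,s'}=A_{s'}-A_s$ are well defined and $G_p$-invariant; as the $A_j$ are pairwise disjoint, a difference of two disjoint sets being $G_p$-invariant forces each summand to be a union of cosets of $G_p$, and running over all pairs shows every $A_j$ is such a union. But then each $A_jA_\ell^{-1}$, and hence $\sum_{\ell\ne j}A_jA_\ell^{-1}=\lambda(G-e)$, is $G_p$-invariant, which is impossible: on the coset $G_p$ the element $\lambda(G-e)$ has coefficient $0$ at $e$ and coefficient $\lambda>0$ elsewhere, while $|G_p|=p\ge 2$.

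If $\chi(\D)=0$, then $\chi$ is a non-principal character annihilating the proper nonempty set $\D$, so Theorem \ref{pq} shows $\D$ is a disjoint union of cosets of a subgroup $H\in\{G_p,G_q\}$; as $\D\ne G$, at least one coset of $H$ is omitted from $\D$. Passing to the prime-order quotient $\bar G=G/H$ via $\sigma$, the images $\bar A_j=\sigma(A_j)$ satisfy (\ref{EDE}) (with $\lambda'=\lambda|H|$ and $\mu'=\lambda$, so $\mu'\mid\lambda'$ and $\lambda'\ge\mu'$), while $\sigma(\D)=|H|\sum_{\bar u\in U}\bar u$ for a nonempty proper subset $U\subsetneq\bar G$. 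By Lemma \ref{ExistenceNonPrincipal} there is a non-principal character $\bar\chi$ of $\bar G$ with $\bar\chi(\sigma(\D))\ne 0$, so Lemmas \ref{chi(Dj)} and \ref{HighlyUnevern} again yield indices $r,s$ with $\bar\alpha_r\ne\bar\alpha_s$ and $\bar\chi\big(\sum_{\ell\ne r,s}\bar A_\ell\big)=0$; arguing as in Lemma \ref{NC2} (the minimal polynomial of $\bar\chi(\bar z)$ is the cyclotomic polynomial of degree $p'-1$), we get $\sum_{\ell\ne r,s}\bar A_\ell=\theta'\bar G$ for some integer $\theta'$.

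I expect this last case to be the main obstacle: because $\bar G$ has prime order, Lemma \ref{NC2} on its own only returns the congruence $(m-2)k\equiv 0\pmod{p'}$, not a contradiction, exactly the point where the $p^2$-argument still had a second prime to quotient by. The way out is to use the omitted coset. For any $\bar u\notin U$ the coefficient of $\sum_{\ell\ne r,s}\bar A_\ell$ at $\bar u$ equals $\sum_{\ell\ne r,s}|A_\ell\cap\sigma^{-1}(\bar u)|$, which is $0$ because every $A_\ell\subseteq\D$ while $\sigma^{-1}(\bar u)$ is disjoint from $\D$. Since $\sum_{\ell\ne r,s}\bar A_\ell=\theta'\bar G$ has all coefficients equal to $\theta'$, this forces $\theta'=0$, so the sum is $0$; but it has $(m-2)k\ge 3k>0$ elements, a contradiction. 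The two cases together establish the nonexistence. In writing this up I would verify the quotient bookkeeping for (\ref{EDE}) and check that the hypotheses $m>3$, $k>1$ required by Lemmas \ref{chi(Dj)}, \ref{HighlyUnevern} and \ref{NC2} hold in both the ambient group and the quotient.
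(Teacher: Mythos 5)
Your proposal is correct, and its main branch is in essence the paper's own proof: a non-principal character $\chi$ with $\chi\bigl(\sum_j A_j\bigr)\neq 0$, Lemmas \ref{chi(Dj)} and \ref{HighlyUnevern} together with (\ref{alpha1}) to force $\chi\bigl(\sum_{\ell\neq r,s}A_\ell\bigr)=0$, Theorem \ref{pq} to obtain coset structure, the conclusion that every $A_j$ is a union of cosets of $H$, and the final contradiction with $\lambda(G\setminus\{e\})$ on $H$. There are three genuine differences. (i) Your opening bifurcation on a fixed \emph{faithful} character is unnecessary: the paper instead invokes Lemma \ref{ExistenceNonPrincipal} (applicable because $\sum_j A_j\neq tG$, which follows from Theorem \ref{NC}) to choose from the outset a non-principal $\chi$ that does not annihilate $\sum_j A_j$, so your Case 2 never arises in the paper. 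That said, your Case 2 is handled correctly and is a self-contained alternative: passing to $G/H$, deriving $\sum_{\ell\neq r,s}\bar{A}_\ell=\theta'\,\overline{G}$ as in Lemma \ref{NC2}, and then killing $\theta'$ by evaluating the coefficient at a coset omitted from $\sum_j A_j$ is a nice way to turn the mere congruence $(m-2)k\equiv 0 \pmod{p'}$ into a contradiction. (ii) Within the main case, your differencing step $S_{r,s}-S_{r',s}=A_{r'}-A_r$, exploiting $a,b\geq 2$ and $G_p$-invariance of the annihilated sums, is a cleaner route to ``each $A_j$ is a union of cosets'' than the paper's argument, which works with a partial coset $T\subset A_m$ and the disjointness $A_{m-1}\cap A_m=\emptyset$. (iii) You explicitly verify that a \emph{single} subgroup $H$ serves all the annihilated sums, since $0<(m-2)k<pq$ prevents both $p$ and $q$ from dividing $(m-2)k$; the paper uses this tacitly when asserting that $\sum_{\ell\neq 1,m-1}A_\ell$ is ``also a disjoint union of cosets of the subgroup $H$,'' so your write-up actually closes a small expository gap in the published argument.
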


\proof By Theorem \ref{m34}, need only consider $m\geq 5$. Since $p,q$  are distinct primes, a group $G$ of order $pq$ is a cyclic group. Suppose, on the contrary, that $\{A_1,\ldots, A_m\}$ is a $(pq,m,k,\lambda)$-SEDF over $G$.
Then $A=\sum_{1\leq j\leq m}A_j\neq G$ by Theorem \ref{NC}.  By Lemma \ref{ExistenceNonPrincipal}, there would be a non-principal character $\chi$ of $G$ satisfying $\chi(A)\neq 0$. By Lemma \ref{HighlyUnevern}, we would have two distinct real numbers $\alpha,\beta$ such that $\alpha_j\in \{\alpha,\beta\}$ for $1\leq j\leq m$, with each value occurring at least twice. Without loss of generality, let $\alpha_1=\cdots =\alpha_a=\alpha$ and $\alpha_{a+1}=\cdots =\alpha_m=\beta$.  Then Equation (\ref{alpha1}) forces
$$\chi \left (\sum\limits_{\ell\neq 1,m}A_{\ell}\right )=0.$$
By Theorem \ref{pq}, $\sum\limits_{\ell\neq 1,m}A_{\ell}$ would be a disjoint union of cosets of some proper subgroup $H$ of $G$ (where $|H|=p$ or $|H|=q$).
Similarly, $\sum\limits_{\ell\neq 1,m-1}A_{\ell}$ would also be a disjoint union of cosets of the subgroup $H$ of $G$. If $A_m$ were not a disjoint union of cosets of the subgroup $H$ of $G$, then there would be a nonempty  subset $T \subset A_m$ such that $T$ is a proper subset of a coset $H'$ of $H$ of $G$ and $H'\setminus T$ is a subset of  $\sum\limits_{\ell\neq 1,m-1,m}A_{\ell}$. Since $A_{m-1}\cap A_m=\emptyset$, $\sum\limits_{\ell\neq 1,m}A_{\ell}$ does not contain $H'$, i.e., not a
disjoint union of cosets of the subgroup $H$ of $G$,  a contradiction. Therefore, each $A_j$ is a disjoint union of cosets of the subgroup $H$ of $G$, while this leads to $\sum_{\ell \neq j}A_jA_{\ell}^{-1}$ having  no elements of $H$, a contradiction. \qed

\section{ A construction of SEDFs via cyclotomic classeses}

In this section, we use cyclotomic classes of index 4 and 6  to present two classes of SEDFs.

Let $q$ be a prime power with $q=ef+1$ and $GF(q)$ be the finite field of $q$ elements. Given a primitive element $\alpha$ of $GF(q)$, define $C_0^{e}=\{\alpha^{je}\colon 0\leq j\leq f-1\}$, the multiplicative group generated by $\alpha^e$, and $$C_i^{e}=\alpha^iC_0^{e}$$
for $1\leq i\leq e-1$. Then $C_0^{e},  C_1^{e},\ldots,  C_{e-1}^{e}$ partition $GF(q)^*=GF(q)\setminus \{0\}$. The $C_i^{e}$ $(0\leq i<e)$ are known as {\em cyclotomic classes} of index $e$ (with respect to $GF(q)$). 

In the theory of cyclotomy, the numbers of solutions of
$$x+1=y,\ x\in C_i^{e},\ y\in C_j^{e}$$
are called {\em cyclotomic numbers} of order $e$ respect to $GF(q)$ and denoted by $(i,j)_e$.

\begin{construction}
\label{SEDF-Cyclotomic}
Let $q = ef + 1 \equiv  1 \pmod 4$ be a prime power, where $e$ is even. Then
$C^e_0,C^e_{\frac{e}{2}}$
forms a $(q,2,\frac{q-1}{e},\frac{q-1}{e^2})$-SEDF if all $(i,\frac{e}{2})_e$, $0\leq i<e$, are equal.
\end{construction}

\proof Set $G=GF(q)$, $A_1=C_0^{e}$ and $A_2=C_{e/2}^{e}$. Computing the external differences of $A_1$ and $A_2$, we have
$$\Delta_E(A_{2},A_1)=\Delta_E(C_{e/2}^{e},C_0^{e})=\{x-1:~x\in C_{e/2}^e\}\cdot C_0^{e}.$$
Since $|\{x-1:~x\in C_{e/2}^e\}\cap C_i^e|=|C_{e/2}^e\cap (C_i^e+1)|=(i,\frac{e}{2})_e$ for $0\leq i<e$, we have
$$\Delta_E(C_{e/2}^{e},C_0^{e})=\{x-1:~x\in C_{e/2}^e\}\cdot C_0^{e}=\bigcup\limits_{i=0}^e(i,\frac{e}{2})_eC_i^e.$$
Clearly, $\Delta_E(A_{2},A_1)=-\Delta_E(A_{1},A_2)$.
Then, $\{C_0^{e},C_{e/2}^{e}\}$ is a strong external difference family if and only if $\lambda=(0,\frac{e}{2})_e=(1,\frac{e}{2})_e=\cdots =(e-1,\frac{e}{2})_e$. \qed

\begin{lemma}[\cite{Storer1967}]
\label{4 cyclotomic numbers}
Let $q=4f+1$ be a prime power, where $f$ is even. The cyclotomic numbers $(0,2)_4,(1,2)_4, (2,2)_4, (3,2)_4,$ are determined by the relations
$$ 16(0,2)_4=16(2,2)_4=q-3+2s,$$
$$ 16(1,2)_4= 16(3,2)_4=q+1-2s,$$
\noindent
where $q=s^{2}+4t^{2}, s\equiv 1\pmod 4$ is the proper representation of $q$ if $ q\equiv 1\pmod 4$; the sign of $t$ is ambiguously determined.
\end{lemma}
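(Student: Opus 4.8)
The plan is to establish these values by the classical method of Gauss and Jacobi sums, which is what underlies Storer's treatment. Fix a multiplicative character $\chi$ of $GF(q)^*$ of order $4$ with $\chi(\alpha)=\zeta$, where $\zeta$ is a primitive fourth root of unity; such a $\chi$ exists because $q\equiv 1\pmod 4$. Then $x\in C_a^{4}$ exactly when $\chi(x)=\zeta^{a}$, and orthogonality of the powers of $\chi$ gives the indicator identity $\mathbf{1}_{C_a^{4}}(x)=\frac14\sum_{m=0}^{3}\zeta^{-am}\chi^m(x)$. Substituting this (for the two constraints $x\in C_a^{4}$ and $x+1\in C_2^{4}$) into the defining count and interchanging summations turns each cyclotomic number into a character sum,
$$16\,(a,2)_4=\sum_{m=0}^{3}\sum_{n=0}^{3}\zeta^{-am}(-1)^{n}\,S_{m,n},\qquad S_{m,n}=\sum_{x\neq 0,-1}\chi^m(x)\chi^n(x+1),$$
where $\zeta^{-2n}=(-1)^{n}$ because $\chi^2$ is the quadratic character $\eta$. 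Since $f$ is even we have $-1=\alpha^{2f}\in C_0^{4}$, so $\chi(-1)=1$; the substitution $x\mapsto -x$ then shows $S_{m,n}=J(\chi^m,\chi^n)$ is exactly a Jacobi sum, with no extra sign. The degenerate cases evaluate elementarily: $J(1,1)=q-2$, $J(\chi^m,1)=J(1,\chi^n)=-1$ for $m,n\not\equiv 0$, and $J(\chi^m,\chi^{-m})=-\chi^m(-1)=-1$.

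Second, I would supply the only nonelementary ingredient, the evaluation of the quartic Jacobi sum. The sums that are not degenerate are $J(\chi,\chi)$, its conjugate $J(\chi^3,\chi^3)$, and $J(\chi,\chi^2)$ together with $J(\chi^2,\chi^3)=\overline{J(\chi,\chi^2)}$. Using $g(\chi)^2=g(\chi^2)J(\chi,\chi)$, the relation $g(\chi^3)=\chi(-1)\overline{g(\chi)}=\overline{g(\chi)}$, and the fact that the quadratic Gauss sum satisfies $g(\eta)^2=\eta(-1)q=q$ (because $\eta(-1)=1$), one finds the pleasant identity $J(\chi,\chi^2)=g(\chi)g(\chi^2)/g(\chi^3)=g(\chi^2)^2J(\chi,\chi)/q=J(\chi,\chi)$. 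Thus every genuine Jacobi sum above is $J(\chi,\chi)$ or its conjugate, and the whole computation rests on the single number $J(\chi,\chi)$. By Gauss's theorem $J(\chi,\chi)=s+2t\,\zeta$ with $s^2+4t^2=q$, and a classical supplementary congruence of the form $J(\chi,\chi)\equiv-1\pmod{(1+\zeta)^{3}}$ fixes the normalisation $s\equiv 1\pmod 4$; replacing $\chi$ by $\chi^3$ conjugates the sum and flips the sign of $t$, which is exactly the stated ambiguity in $t$.

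Finally, I would substitute and collect. Because the quartic sums occur in the conjugate pair $J(\chi,\chi)$, $J(\chi^3,\chi^3)$ (and likewise for the $\chi^2$ mixed sums) and are attached to conjugate weights $\zeta^{-m}$, summing a term with its conjugate produces $2\operatorname{Re}J(\chi,\chi)=2s$, while the imaginary parts carrying $t$ cancel; this is the structural reason $t$ is absent from the final answer. The elementary pieces assemble into the constant term and the real part of the quartic sum contributes the $\pm 2s$; separating $a=0$ from $a=1$ (so that the weight becomes $1-\zeta^{-m}$, annihilating $m=0$) yields $16(0,2)_4-16(1,2)_4=4s-4$, while $16(0,2)_4+16(1,2)_4=2q-2$ comes from the elementary terms. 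Together these give $16(0,2)_4=q-3+2s$ and $16(1,2)_4=q+1-2s$, and the symmetries $(a,j)_4=(j,a)_4$ (valid since $\chi(-1)=1$) and $(a,j)_4=(4-a,j-a)_4$ force $(2,2)_4=(0,2)_4$ and $(3,2)_4=(1,2)_4$. The main obstacle is the second step: proving the quartic Jacobi-sum evaluation with the precise congruence $s\equiv 1\pmod 4$ is the genuinely arithmetic heart of the matter (essentially Gauss's determination of the sign in the biquadratic case), whereas the first and third steps are careful but routine character-sum bookkeeping, including the treatment of the boundary terms at $x=0,-1$.
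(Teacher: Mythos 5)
The paper does not actually prove this lemma: it is quoted from Storer's book \cite{Storer1967}, where it appears as the classical determination of the cyclotomic numbers of order four, so there is no in-paper argument to compare against. Your Gauss--Jacobi-sum derivation is the standard classical route to this result, and its skeleton is sound. The indicator expansion, the reduction $S_{m,n}=J(\chi^m,\chi^n)$ using $\chi(-1)=1$ (valid since $f$ even gives $q\equiv 1\pmod 8$), the degenerate evaluations, the identity $J(\chi,\chi^2)=J(\chi,\chi)$ via Gauss sums, and the symmetries $(a,j)_4=(j,a)_4$ and $(a,j)_4=(4-a,j-a)_4$ are all correct. Carrying out the substitution confirms your structural claims: the terms involving $J(\chi,\chi)$ and $J(\chi,\chi^2)$ cancel pairwise, the imaginary parts drop out, and what survives is a single conjugate pair together with the elementary terms, yielding exactly the two linear equations $16(0,2)_4+16(1,2)_4=2q-2$ and $16(0,2)_4-16(1,2)_4=4s-4$ whose solution is the lemma.

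There is, however, one concrete sign error, located at precisely the step you flag as the arithmetic heart. The congruence $J(\chi,\chi)\equiv -1 \pmod{(1+\zeta)^3}$ forces $\operatorname{Re}J(\chi,\chi)\equiv -1\pmod 4$, not $+1$: for instance, for $q=17$ one computes $J(\chi,\chi)=-1\pm 4i$, while the lemma's normalization $17=1^2+4\cdot 2^2$ gives $s=1$. Thus in the lemma's notation $s=-\operatorname{Re}J(\chi,\chi)$, i.e.\ $J(\chi,\chi)=-s+2ti$ with $s\equiv 1\pmod 4$; your statement that $J(\chi,\chi)=s+2t\zeta$ with $s\equiv 1\pmod 4$ is incompatible with the very congruence you invoke. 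The surviving conjugate pair is $J(\chi^2,\chi)$, $J(\chi^2,\chi^3)$, and it enters the count for $16(0,2)_4$ with weight $-1$, contributing $-2\operatorname{Re}J(\chi,\chi)=+2s$; if one instead takes your normalization at face value and does the bookkeeping correctly, the result comes out as $16(0,2)_4=q-3-2s$, the wrong sign. The repair is local (state the normalization as $\operatorname{Re}J(\chi,\chi)\equiv -1\pmod 4$ and track the weight signs explicitly), but since the whole evaluation hinges on this normalization, it is the one step of your sketch that cannot be left as written.
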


\begin{theorem}
Let $q$ be a prime power such that $q=1+16t^2$, then there exists a $(q, 2, \frac{q-1}{4},\frac{q-1}{16})$-SEDF.
\end{theorem}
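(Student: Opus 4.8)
The plan is to obtain this family as a special case of Construction~\ref{SEDF-Cyclotomic} with $e=4$. With that choice the construction produces, whenever its hypotheses hold, a $(q,2,\frac{q-1}{4},\frac{q-1}{16})$-SEDF from the pair $\{C_0^4, C_2^4\}$, which is exactly the object we want. So the entire argument reduces to verifying the three hypotheses of Construction~\ref{SEDF-Cyclotomic} for $e=4$: that $q\equiv 1\pmod 4$, that $e=4$ is even, and that the four cyclotomic numbers $(0,2)_4,(1,2)_4,(2,2)_4,(3,2)_4$ are all equal. The first two are immediate, so the real content is the equality of the cyclotomic numbers.

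To establish that equality I would invoke Lemma~\ref{4 cyclotomic numbers} (Storer). First I must check that its hypothesis is met, namely that in the expression $q=4f+1$ the integer $f$ is even. Writing $q=1+16t^2=4(4t^2)+1$, we read off $f=4t^2$, which is indeed even, so Lemma~\ref{4 cyclotomic numbers} applies. The lemma expresses all four cyclotomic numbers in terms of the parameter $s$ coming from the proper representation $q=s^2+4t_0^2$ with $s\equiv 1\pmod 4$; note that $t_0$ itself does not appear in the formulas, so only the value of $s$ matters.

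The key step is therefore to identify $s$. Here I would use the hypothesis $q=1+16t^2$ directly: since $q=1^2+4(2t)^2$ and $1\equiv 1\pmod 4$, this display is a proper representation of $q$ of exactly the required form, with $s=1$ (and $t_0=2t$). Appealing to the uniqueness of the proper representation (the sign condition $s\equiv 1\pmod 4$ pins down $s$), I conclude $s=1$. Substituting $s=1$ into the relations of Lemma~\ref{4 cyclotomic numbers} gives
\[
16(0,2)_4=16(2,2)_4=q-3+2=q-1,\qquad 16(1,2)_4=16(3,2)_4=q+1-2=q-1,
\]
so that $(0,2)_4=(1,2)_4=(2,2)_4=(3,2)_4=\frac{q-1}{16}$. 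All four cyclotomic numbers coincide, the last hypothesis of Construction~\ref{SEDF-Cyclotomic} is satisfied with $\lambda=\frac{q-1}{16}$, and the construction yields the desired $(q,2,\frac{q-1}{4},\frac{q-1}{16})$-SEDF.

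I expect the main obstacle to be the clean justification that $s=1$: one must argue that the representation $q=1+16t^2$ is the proper representation referred to in Lemma~\ref{4 cyclotomic numbers}, rather than merely one representation among possibly several. This is why matching the parity constraint $f=4t^2$ even is important—it is precisely the condition under which the chosen representation is forced to have $s=1$—and it is the reason the theorem requires $q=1+16t^2$ rather than the weaker $q=1+4t^2$ that equality of the cyclotomic numbers alone would suggest. Once $s=1$ is secured, the remainder is routine substitution into the formulas of Lemma~\ref{4 cyclotomic numbers} and a direct appeal to Construction~\ref{SEDF-Cyclotomic}.
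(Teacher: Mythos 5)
Your proof is correct and follows essentially the same route as the paper: identify $s=1$ in the proper representation $q=s^2+4t^2$, apply Lemma~\ref{4 cyclotomic numbers} to get all four cyclotomic numbers equal to $\frac{q-1}{16}$, and conclude via Construction~\ref{SEDF-Cyclotomic}. In fact you are more careful than the paper's own proof, which silently skips the verification that $f=4t^2$ is even and the uniqueness argument pinning down $s=1$.
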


\proof Since $q=1+16t^2$, i.e., $s=1$, by Lemma \ref{4 cyclotomic numbers} we have
$$16(0,2)_4=16(2,2)_4=q-3+2s=q+1-2s=16(1,2)_4=16(1,3)_4.$$
So, $\{C_0^{4},C_{2}^{4}\}$ is a  $(q, 2, \frac{q-1}{4},\frac{q-1}{16})$-SEDF by Construction \ref{SEDF-Cyclotomic}. \qed

\begin{lemma}  [\cite{Whiteman1960}]
\label{6 cyclotomic numbers}
Let $p=6f+1$ be a prime, where $f$ is even. When $2\in C_0^3$, the cyclotomic numbers $(0,3)_6,(1,3)_6,(2,3)_6,(3,3)_6,(4,3)_6,(5,3)_6$, are determined by the relations
$$ 36(0,3)_6=36(3,3)_6=q-5+4s,$$
$$ 36(1,3)_6=36(4,3)_6=q+1-2s,$$
$$ 36(2,3)_6=36(5,3)_6=q+1-2s,$$
\noindent
where $p=s^{2}+3t^{2}, s\equiv 1\pmod 6$ is the proper representation of $p$; the sign of $t$ is ambiguously determined.
\end{lemma}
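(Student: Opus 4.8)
The plan is to prove the lemma by writing each cyclotomic number $(i,3)_6$ as a finite Fourier sum of Jacobi sums and then evaluating those Jacobi sums through the arithmetic of the sextic Gauss sum. Fix a multiplicative character $\chi$ of $GF(p)$ of order $6$ with $\chi(\alpha)=\zeta$, $\zeta=e^{2\pi i/6}$, and write the indicator of the class $C_i^6$ as $\frac{1}{6}\sum_{a=0}^{5}\zeta^{-ai}\chi^a$. Counting the solutions of $1+x=y$ with $x\in C_i^6$ and $y\in C_3^6$, the substitution $u=-x$ turns the inner sum $\sum_x\chi^a(x)\chi^b(x+1)$ into $\chi^a(-1)J(\chi^a,\chi^b)$, and since $\chi^3$ is the quadratic character one has $\zeta^{-3b}=(-1)^b$; this yields
\begin{equation*}
36\,(i,3)_6=\sum_{a,b=0}^{5}\zeta^{-ai}(-1)^{b}\chi^a(-1)\,J(\chi^a,\chi^b).
\end{equation*}
Because $f$ is even we have $-1\in C_0^6$, so $\chi^a(-1)=1$ throughout and the right-hand side simplifies.

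First I would dispose of the degenerate terms: whenever $a=0$, $b=0$, or $a+b\equiv 0\pmod 6$, the Jacobi sum collapses to $-1$ (to $p-2$ in the term $a=b=0$), contributing only explicit integers. The real content lies in the nondegenerate sextic Jacobi sums $J(\chi^a,\chi^b)$, each of absolute value $\sqrt{p}$. Since $6=2\cdot 3$, these factor through the quadratic and cubic characters by the Gauss-sum identity $J(\chi^a,\chi^b)=g(\chi^a)g(\chi^b)/g(\chi^{a+b})$ together with the Hasse--Davenport product relation, so they can be expressed in terms of the cubic Jacobi sum $J(\chi^2,\chi^2)\in\mathbb{Z}[\zeta_3]$ and a quadratic sign. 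Matching the real parts of the resulting sums against the proper representation $p=s^2+3t^2$, normalized by $s\equiv 1\pmod 6$, is exactly what converts the character sum into integer formulas, and the three real parts that can occur are precisely $q-5+4s$ and $q+1-2s$ (the latter twice).

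The hypothesis $2\in C_0^3$ is what I would use to fix the otherwise ambiguous branch of the cubic Jacobi sum. The value $J(\chi^2,\chi^2)$ is determined only up to units of $\mathbb{Z}[\zeta_3]$ and conjugation, and the assertion that $2$ is a cubic residue fixes which associate occurs (equivalently it controls the factor $\chi^2(2)$ that appears when the mixed sextic sums are reduced) and aligns it with the normalization $s\equiv 1\pmod 6$. The remaining sign ambiguity of $t$ is harmless because only $s$, and not $t$, survives in the final expressions for the $j=3$ column.

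The main obstacle, and the bulk of the work, is the explicit evaluation of the nondegenerate sextic Jacobi sums and the correct resolution of their signs under $2\in C_0^3$: one must track the interplay of the quadratic and cubic parts, apply the conjugation and Hasse--Davenport relations consistently, and keep the normalizations $s\equiv 1\pmod 6$ and $-1\in C_0^6$ aligned throughout. As an independent check I would verify that the six values satisfy the classical symmetry relations $(i,j)_6=(j,i)_6$ (valid since $f$ is even) and $(i,j)_6=(6-i,\,j-i)_6$, which together force the pairings $(0,3)_6=(3,3)_6$, $(1,3)_6=(4,3)_6$, $(2,3)_6=(5,3)_6$, and the column-sum identity $\sum_{i=0}^{5}(i,3)_6=f$; both are consistent with the stated formulas and guard against sign errors.
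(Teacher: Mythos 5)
Note first that the paper contains no proof of this lemma at all: it is imported verbatim from Whiteman \cite{Whiteman1960} (the order-$6$ cyclotomic numbers go back to Gauss and Dickson), so the only fair comparison is with the method of the cited source --- and your outline is indeed that classical method. The parts you actually carry out are correct: the Fourier expansion of the class indicators, the identity $\sum_x\chi^a(x)\chi^b(x+1)=\chi^a(-1)J(\chi^a,\chi^b)$, the observation $\zeta^{-3b}=(-1)^b$, the fact that $f$ even gives $-1\in C_0^6$ and hence $\chi^a(-1)=1$, the degenerate Jacobi values $-1$ and $p-2$ (using $J(\chi^a,\chi^{-a})=-\chi^a(-1)=-1$ here), and the symmetry relations $(i,j)_6=(j,i)_6$ and $(i,j)_6=(6-i,j-i)_6$, which do force the pairings $(0,3)_6=(3,3)_6$, $(1,3)_6=(4,3)_6$, $(2,3)_6=(5,3)_6$ and the column sum $\sum_{i}(i,3)_6=f$. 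You also correctly identify the role of the hypothesis $2\in C_0^3$: it is equivalent to $\chi^2(2)=1$, which is exactly the factor produced by the Hasse--Davenport reduction of the sextic Gauss sums to quadratic and cubic ones.

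The genuine gap is that the entire content of the lemma --- the constants $q-5+4s$ and $q+1-2s$ --- is asserted rather than derived. You never evaluate the nondegenerate sextic Jacobi sums: the reduction via $J(\chi^a,\chi^b)=g(\chi^a)g(\chi^b)/g(\chi^{a+b})$ and Hasse--Davenport is named but not executed, the primary-normalized value of the cubic Jacobi sum in $\mathbb{Z}[\zeta_3]$ is not computed, and it is not shown that the branch fixed by $2\in C_0^3$, combined with $s\equiv 1\pmod 6$ in $p=s^2+3t^2$, yields precisely the coefficient pattern $(4s,-2s,-2s)$ with $t$ absent. Moreover, the consistency checks you propose cannot close this gap, because they hold equally in the case $2\notin C_0^3$ where the stated formulas fail: for $p=13$ (so $s=1$, $f=2$, and $2$ is not a cubic residue) a direct count gives $(2,3)_6=(5,3)_6=1$ and the other four values $0$, which satisfies your symmetry pairings and the column-sum identity but contradicts the lemma's formulas (which would require $36\mid 12$). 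So your tests can detect a sign slip but cannot certify the constants or even detect whether the hypothesis $2\in C_0^3$ was used. As it stands, the proposal is a correct roadmap to the cited literature's argument, but the decisive computation --- the bulk of the work, as you yourself say --- is missing, so it is a plan rather than a proof.
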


\begin{corollary}[\cite{IR1990}]
\label{3 character}
Let $p\equiv 1\pmod 6$ be a prime. Then $2\in C_0^3$ in $GF(p)$ if and only if $p=x^2+27y^2$.
\end{corollary}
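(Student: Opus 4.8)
The plan is to recognize the statement as the classical criterion of Gauss for $2$ to be a cubic residue, and to prove it via cubic reciprocity in the ring of Eisenstein integers $\mathbb{Z}[\omega]$, where $\omega=e^{2\pi i/3}$.

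First I would note that $C_0^3$ is exactly the group of nonzero cubes in $GF(p)^*$, so $2\in C_0^3$ is equivalent to the solvability of $x^3\equiv 2\pmod p$, equivalently to $2^{(p-1)/3}\equiv 1\pmod p$. Since $p\equiv 1\pmod 3$, the prime $p$ splits in $\mathbb{Z}[\omega]$ as $p=\pi\bar\pi$, and I would fix $\pi=a+b\omega$ to be the \emph{primary} associate (normalized so that $a\equiv 2$ and $b\equiv 0\pmod 3$), so that $N(\pi)=a^2-ab+b^2=p$. Because the residue field $\mathbb{Z}[\omega]/(\pi)$ is isomorphic to $GF(p)$, the element $2$ is a cubic residue modulo $p$ if and only if the cubic residue symbol $\left(\frac{2}{\pi}\right)_3$ equals $1$.

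Next I would invoke the supplementary law for the cubic character of $2$: for primary $\pi=a+b\omega$ one has $\left(\frac{2}{\pi}\right)_3=1$ if and only if $b$ is even. It then remains a purely elementary matter to match this with the quadratic form. From $N(\pi)=p$ one gets $4p=(2a-b)^2+3b^2$; writing $L=2a-b$ and using $b\equiv 0\pmod 3$ to set $M=b/3$, this becomes $4p=L^2+27M^2$. A short parity check shows that $L$ and $M$ always share the parity of $b$, so they are both even exactly when $b$ is even, and (using the essential uniqueness of the representation $4p=L^2+27M^2$ up to signs, coming from the factorization $p=\pi\bar\pi$) this holds if and only if $p=x^2+27y^2$ (take $x=L/2$, $y=M/2$). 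Hence $p=x^2+27y^2\iff b\text{ even}\iff\left(\frac{2}{\pi}\right)_3=1\iff 2\in C_0^3$, which is the claim.

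The main obstacle is establishing the supplementary law, i.e.\ evaluating $\left(\frac{2}{\pi}\right)_3$ in terms of the coefficient $b$. The standard route computes $2^{(p-1)/3}\bmod\pi$ through the Gauss and Jacobi sums attached to the cubic residue character, or deduces it from Eisenstein's cubic reciprocity law together with the inertness of $2$ in $\mathbb{Z}[\omega]$ (valid since $2\equiv 2\pmod 3$); this is precisely the content packaged in the cited reference \cite{IR1990}, so within the paper it suffices to quote it. As a conceptual alternative I would mention the class field theory viewpoint: the form $x^2+27y^2$ has discriminant $-108$ and is the principal form of the order of conductor $3$ in $\mathbb{Q}(\sqrt{-3})$, whose ring class field is $\mathbb{Q}(\omega,\sqrt[3]{2})$; an unramified prime $p$ is represented by $x^2+27y^2$ if and only if it splits completely in $\mathbb{Q}(\omega,\sqrt[3]{2})$, i.e.\ iff $p\equiv 1\pmod 3$ and $2$ is a cubic residue modulo $p$. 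This yields the same equivalence without explicit sum computations, at the cost of heavier machinery.
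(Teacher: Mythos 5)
The paper offers no proof of this corollary at all: it is quoted verbatim from the cited reference \cite{IR1990}, and your proposal is precisely the standard cubic-reciprocity argument given there --- pass to the primary prime $\pi=a+b\omega$ above $p$ in $\mathbb{Z}[\omega]$, invoke the supplementary law $\left(\frac{2}{\pi}\right)_3=1\iff 2\mid b$, and translate the parity of $b$ into the representation $p=x^2+27y^2$ via $4p=(2a-b)^2+27(b/3)^2$ together with the essential uniqueness of that representation. So your proof is correct and takes essentially the same route as the paper's source (your elementary steps, the parity match and the uniqueness up to signs, all check out), with the supplementary law legitimately deferred to the same reference the paper itself cites.
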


\begin{theorem}
Let $p$ be a prime  such that $p=1+108t^2$, then there exists a $(p, 2, \frac{p-1}{6},\frac{p-1}{36})$-SEDF.
\end{theorem}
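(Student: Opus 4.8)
The plan is to mirror the proof of the order-4 case and reduce everything to Construction \ref{SEDF-Cyclotomic} with $e=6$, for which the relevant half-index is $e/2=3$. By that construction, the pair $\{C_0^6,C_3^6\}$ is a $(p,2,\frac{p-1}{6},\frac{p-1}{36})$-SEDF precisely when the six cyclotomic numbers $(0,3)_6,(1,3)_6,\ldots,(5,3)_6$ all coincide. My entire task is therefore to show that the hypothesis $p=1+108t^2$ forces this equality, via Whiteman's evaluation in Lemma \ref{6 cyclotomic numbers}.

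First I would record the congruence conditions needed to invoke the quoted results. Since $p-1=108t^2=6\cdot 18t^2$, the index parameter is $f=(p-1)/6=18t^2$, which is even; this is exactly the hypothesis ``$f$ even'' of Lemma \ref{6 cyclotomic numbers}, and it also yields $p\equiv 1\pmod 4$ as required by Construction \ref{SEDF-Cyclotomic}. Next I would verify the two substantive hypotheses built into Lemma \ref{6 cyclotomic numbers}, namely that $2\in C_0^3$ and that the proper representation $p=s^2+3t'^2$ has $s=1$. Writing $p=1^2+27(2t)^2$ exhibits $p$ in the form $x^2+27y^2$, so $2\in C_0^3$ by Corollary \ref{3 character}. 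Writing instead $p=1^2+3(6t)^2$ exhibits $p$ in the form $s^2+3t'^2$ with $s=1\equiv 1\pmod 6$; since such a representation of a prime $p\equiv 1\pmod 3$ is unique up to signs, this is the proper representation and hence $s=1$.

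Finally, substituting $s=1$ into the formulas of Lemma \ref{6 cyclotomic numbers} gives $36(0,3)_6=36(3,3)_6=p-5+4=p-1$ together with $36(1,3)_6=36(4,3)_6=36(2,3)_6=36(5,3)_6=p+1-2=p-1$, so every $(i,3)_6$ equals $\frac{p-1}{36}$. Construction \ref{SEDF-Cyclotomic} then delivers the claimed SEDF. The one genuine subtlety, and the step I would state most carefully, is the simultaneous reading of $p=1+108t^2$ as both $1^2+27(2t)^2$ and $1^2+3(6t)^2$: the first licenses Whiteman's formulas through Corollary \ref{3 character}, while the second pins down $s=1$, the single value that makes the ``$-5+4s$'' and ``$+1-2s$'' branches collapse to the common value $-1$. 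Everything else is a routine arithmetic check, so I expect no serious obstacle beyond correctly matching these two quadratic-form expressions to the hypotheses of the two cited number-theoretic results.
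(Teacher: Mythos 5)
Your proposal is correct and follows essentially the same route as the paper: invoke Corollary \ref{3 character} via $p=1^2+27(2t)^2$ to get $2\in C_0^3$, read $p=1^2+3(6t)^2$ to pin down $s=1$ in Whiteman's formulas (Lemma \ref{6 cyclotomic numbers}), observe that all $(i,3)_6$ then equal $\frac{p-1}{36}$, and conclude by Construction \ref{SEDF-Cyclotomic}. In fact you are slightly more careful than the paper, which leaves the verification of $f$ even, $p\equiv 1\pmod 4$, and the identification $s=1$ implicit.
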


\proof From Corollary \ref{3 character}, we have $2\in C_0^3$ for $p=1+108t^2$. Then, by Lemma \ref{6 cyclotomic numbers} we have
$$36(0,3)_6=p-5+4s=36(1,3)_6=p+1-2s=36(2,3)_6=p+1-2s,$$ where $s=1$.
So, $\{C_0^{6},C_{3}^{6}\}$ is a  $(p, 2, \frac{p-1}{6},\frac{p-1}{36})$-SEDF by Construction \ref{SEDF-Cyclotomic}. \qed

\section{Conclusion}

In this paper, we use character-theoretic techniques to prove some new necessary conditions for the
existence of SEDFs, which gives new nonexistence results for certain types of SEDFs. The results generalized
the results from \cite{MS-arXiv}. We also use cyclotomoic classes to construct two new infinite classes of
$(n, 2, k, \lambda )$-SEDFs. All known results for SEDFs with $m=2$ are  using groups   of prime power order.
Therefore an interesting open problem is: Can we construct an $(n, 2, k, \lambda )$-SEDF with $n$ not a prime
power for $k, \lambda > 1$?

From the known necessary conditions and  constructions, we also make the following conjecture.

\noindent{\bf Conjecture:} There does not exist an $(n, m, k, \lambda )$-SEDF for $m > 2, k > 1$ and $\lambda > 1$.




\end{document}